\documentclass[12pt]{amsart}

%\usepackage{color}
%Packages Used%
\usepackage{amstext}
\usepackage{amsthm}
\usepackage{amsmath}
\usepackage{amssymb}
\usepackage{latexsym}
\usepackage{amsfonts}
\usepackage{graphicx}
\usepackage{texdraw}
\usepackage{graphpap}

\usepackage[pagebackref,hypertexnames=false, colorlinks, citecolor=red, linkcolor=red]{hyperref} %,hypertexnames=false,colorlinks,[pagebackref]
\usepackage[backrefs]{amsrefs}

\input txdtools

\bibliographystyle{plain}

%Page Setup%
\setlength{\evensidemargin}{0in}
\setlength{\oddsidemargin}{0in}
\setlength{\topmargin}{-.5in}
\setlength{\textheight}{9in}
\setlength{\textwidth}{6.5in}

\begin{document}
%Commands Used%

% Title of the paper
% \title[short title for the running head]{full title}

\title[Thin Interpolating Sequences]{Thin Sequences}

% Authors of the paper
% \author[short names]{full names}
% Example:\author[C. Fefferman and F. Ricci]{Charles Fefferman and Fulvio Ricci}

\author[P. Gorkin]{Pamela Gorkin$^\dagger$}
\address{Pamela Gorkin, Department of Mathematics\\ Bucknell University\\  Lewisburg, PA  USA 17837}
\email{pgorkin@bucknell.edu}
\thanks{$\dagger$ Research supported in part by Simons Foundation Grant 243653}

\author[B.D.Wick]{Brett D. Wick$^\ddagger$}
\address{Brett D. Wick, Department of Mathematics\\ Washington University - St. Louis\\ One Brookings Drive\\ St. Louis, MO USA 63130-4899}
\email{wick@math.wustl.edu}
\thanks{$\ddagger$ Research supported in part by National Science Foundation DMS grants \# 0955432 and \#1500509.}

% Please add one \address for each author

% The 2010 AMS Classification
\subjclass[2010]{Primary: 46E22.  Secondary:  30D55, 47A, 46B15.}

% Keywords

\keywords{reproducing kernel, thin sequences, interpolation, asymptotic orthonormal sequence}

%%%%%%%%%%%%%%%%%%%%%%%%%%%%%%%%%
%
%  Author packages and commands.
%  Include here any extra LaTeX packages you may use
%  as well as your personal symbol definitions and macros.
%  Please, do not include any command that may alter the
%  layout of the page (width, length, margins, font size, etc.)
%
%%%%%%%%%%%%%%%%%%%%%%%%%%%%%%%%%

\newcommand{\ci}[1]{_{ {}_{\scriptstyle #1}}}

\newcommand{\norm}[1]{\ensuremath{\left\|#1\right\|}}
\newcommand{\abs}[1]{\ensuremath{\left\vert#1\right\vert}}
\newcommand{\p}{\ensuremath{\partial}}
\newcommand{\pr}{\mathcal{P}}

\newcommand{\pbar}{\ensuremath{\bar{\partial}}}
\newcommand{\db}{\overline\partial}
\newcommand{\D}{\mathbb{D}}
\newcommand{\B}{\mathbb{B}}
\newcommand{\Sp}{\mathbb{S}}
\newcommand{\T}{\mathbb{T}}
\newcommand{\R}{\mathbb{R}}
\newcommand{\Z}{\mathbb{Z}}
\newcommand{\C}{\mathbb{C}}
\newcommand{\N}{\mathbb{N}}
\newcommand{\Hc}{\mathcal{H}}
\newcommand{\scrL}{\mathcal{L}}
\newcommand{\td}{\widetilde\Delta}

\newcommand{\CC}{\mathcal{C}}
\newcommand{\EE}{\mathcal{E}}
\newcommand{\RR}{\mathcal{R}}
\newcommand{\TT}{\mathcal{T}}

\newcommand{\La}{\langle }
\newcommand{\Ra}{\rangle }
\newcommand{\rk}{\operatorname{rk}}
\newcommand{\card}{\operatorname{card}}
\newcommand{\ran}{\operatorname{Ran}}
\newcommand{\osc}{\operatorname{OSC}}
\newcommand{\im}{\operatorname{Im}}
\newcommand{\re}{\operatorname{Re}}
\newcommand{\tr}{\operatorname{tr}}
\newcommand{\vf}{\varphi}

\renewcommand{\qedsymbol}{$\Box$}
\newtheorem{thm}{Theorem}[section]
\newtheorem{lm}[thm]{Lemma}
\newtheorem{cor}[thm]{Corollary}
\newtheorem{conj}[thm]{Conjecture}
\newtheorem{prob}[thm]{Problem}
\newtheorem{prop}[thm]{Proposition}
\newtheorem*{prop*}{Proposition}
\newtheorem{defin}[thm]{Definition}
\theoremstyle{remark}
\newtheorem{rem}[thm]{Remark}
\newtheorem*{rem*}{Remark}
\newtheorem{exam}{Example}
\newtheorem{question}{Question}

\numberwithin{equation}{section}

%%%%%%%%%%%%%%%%%%%%%%%%%%%%%%%%%

%%%%%%%%%%%%%%%%%%%%%%%%%%%%%%%%%
%
%  Insert the abstract
%
%%%%%%%%%%%%%%%%%%%%%%%%%%%%%%%%%

\begin{abstract}
We look at thin interpolating sequences and the role they play in uniform algebras, Hardy spaces, and model spaces.\end{abstract}

%%%%%%%%%%%%%%%%%%%%%%%%%%%%%%%%%
%
% Body of the article
% Please insert here the TeX source of your paper
% (except the bibliography)
%
%%%%%%%%%%%%%%%%%%%%%%%%%%%%%%%%%

\maketitle

``One of the striking successes of the function algebra viewpoint has been in the study of the algebra $H^\infty$ of all bounded holomorphic functions on the unit disk $\mathbb{D}$. Not only are the results which have been obtained deep but the questions raised have also enriched the classical study of the boundary behavior of holomorphic functions.'' R. G. Douglas, Mathematical Reviews, (MR0428044 and MR428045)

\section{The beginning of interpolation in Hardy spaces}

R. C. Buck proposed the idea of characterizing interpolating sequences for $H^\infty$, the algebra of bounded analytic functions on the open unit disk $\mathbb{D}$, via an explicit condition on the sequence, \cite{SS}. Buck conjectured that if a sequence of points in $\mathbb{D}$ approached the boundary quickly enough, it would be interpolating for the algebra $H^\infty$; that is, for all $\{w_n\} \in \ell^\infty$ there exists $f \in H^\infty$ such that $f(z_n) = w_n$ for all $n$. We discuss briefly the background on interpolating sequences before turning to thin interpolating sequences.

In 1958, W. Hayman \cite{Hayman} proved the following theorem.

\begin{thm}[Hayman]\label{Hayman} A necessary condition for a sequence $\{z_n\}$ to be an interpolating sequence is that there exist a constant $C > 0$ such that 
\begin{eqnarray}\label{thm:C}
\inf_{n} \prod_{m: m \ne n} \left|\frac{z_m - z_n}{1 - \overline{z_n} z_m}\right| \ge C.
\end{eqnarray}

A sufficient condition is that there exist $\lambda < 1$ and $C_1 > 0$ so that
$$\inf_n \prod_{m: m \ne n} \left[1 - \left(1 - \left|\frac{z_m - z_n}{1 - \overline{z_n} z_m}\right|^\lambda\right)\right] \ge C_1.$$ \end{thm}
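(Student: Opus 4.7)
I would treat the two halves of the theorem separately. Throughout write $b_m(z) = (z-z_m)/(1-\overline{z_m}z)$ for the M\"obius factor at $z_m$, so that the quantity appearing in the statement is $|b_m(z_n)|$.

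The necessary direction is the easier half. The definition of ``$\{z_n\}$ interpolating for $H^\infty$'' is that the restriction map $R\colon H^\infty \to \ell^\infty$, $f \mapsto (f(z_n))$, is surjective. By the open mapping theorem there is $M < \infty$ such that every $(w_n) \in \ell^\infty$ admits a preimage $f \in H^\infty$ with $\|f\|_\infty \le M\|(w_n)\|_\infty$. Applying this with the Kronecker sequence $w_n = \delta_{nk}$ produces, for each $k$, a function $f_k$ of sup-norm at most $M$ that vanishes at every $z_n$ with $n\ne k$ and equals $1$ at $z_k$. Factoring out the Blaschke product $B_k$ with zero set $\{z_n : n\ne k\}$ yields $f_k = B_k g_k$ with $g_k \in H^\infty$ and $\|g_k\|_\infty \le M$; evaluating at $z_k$ gives $|B_k(z_k)| \ge 1/M$, which is \eqref{thm:C} with $C = 1/M$.

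For sufficiency, the plan is to construct, for each $n$, an explicit function $F_n \in H^\infty$ with $F_n(z_n) = 1$, $F_n(z_m) = 0$ for $m \ne n$, and $\sup_n \|F_n\|_\infty \le K$; arbitrary bounded data $(w_n) \in \ell^\infty$ is then interpolated by $f = \sum_n w_n F_n$, with $H^\infty$-convergence guaranteed by a uniform bound $\sum_n |F_n(z)| \le K'$ on $\D$ to be verified along the way. The naive choice $F_n = B_n/B_n(z_n)$ already satisfies the point conditions, but the bound $\|F_n\|_\infty \le 1/C$ coming from the necessary condition is not strong enough to give the summability: one needs $F_n$ to decay rapidly away from $z_n$. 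Hayman's idea, reflected in the appearance of $\lambda < 1$ in the hypothesis, is to modify the Blaschke product using fractional powers of the factors $b_m$, so that the resulting product has modulus behaving like $|b_m|^\lambda$; the weaker individual factors are then compensated by the stronger collective decay dictated by the hypothesis.

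The main technical obstacle is turning the pointwise bound at the $z_n$ into a uniform norm bound for $F_n$ over all of $\D$. Equivalently, one must show that $\sum_{m\ne n}(1-|b_m(z)|)^\lambda$ stays bounded uniformly in $z$ once it is bounded at $z = z_n$. This is the role of the fractional exponent: the analogous sum with exponent $1$ is too tight to propagate from a countable set to the whole disk, while the fractional sum admits subharmonic-majorant estimates that do propagate. This gap is precisely what accounts for the fact that the necessary and sufficient conditions in Theorem~\ref{Hayman} do not coincide. Once the $F_n$ are in hand with the uniform norm bound, the remaining verifications that $F_n(z_m) = \delta_{nm}$ and that $\sum w_n F_n$ converges in $H^\infty$ are straightforward.
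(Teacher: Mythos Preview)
The paper does not prove Theorem~\ref{Hayman}; it merely cites it as Hayman's 1958 result, with the single remark that ``Hayman's proof was constructive and provided other very useful estimates on the sequence.'' So there is no in-paper proof to compare against, and your proposal has to be judged on its own.

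Your necessary direction is correct and is the standard argument. One small point you skipped: before factoring $f_k = B_k g_k$ you implicitly need $\{z_n\}$ to be a Blaschke sequence, which itself follows from interpolation (take $f$ with $f(z_n)=2^{-n}$, say, forcing $|z_n|\to 1$, and then use the $\delta_{nk}$ functions to see the zeros are summable). This is routine but worth a sentence.

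Your sufficient direction, however, is a plan and not a proof, and the plan as written has a real gap. You propose to ``modify the Blaschke product using fractional powers of the factors $b_m$, so that the resulting product has modulus behaving like $|b_m|^\lambda$.'' But $b_m$ vanishes at $z_m$, so $b_m^\lambda$ has no single-valued analytic branch on $\D$; you cannot simply raise the Blaschke factors to a fractional power and stay in $H^\infty$. Hayman's actual construction has to build, by hand, analytic functions whose modulus is comparable to $|b_m(z)|^\lambda$ and which still vanish at $z_m$; this is where the work lies and where the ``other very useful estimates'' the paper alludes to come from. Your appeal to ``subharmonic-majorant estimates'' to propagate the pointwise bound $\sum_{m\ne n}(1-|b_m(z_n)|^\lambda)\le \log(1/C_1)$ from the sequence to all of $\D$ is likewise only a name for the difficulty, not a resolution of it: one needs a concrete mechanism (in Hayman's argument, explicit kernel estimates; in later treatments, a Carleson-measure argument) to get a uniform bound on $\sum_n |F_n(z)|$. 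As written, the sufficiency half identifies the right obstacle but does not clear it.
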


Hayman wrote, ``It seems quite possible that \eqref{thm:C} is in fact sufficient as well as necessary, but I have been unable to prove this.'' Hayman's proof was constructive and provided other very useful estimates on the sequence. Independently and also in 1958, Carleson \cite{C} presented the  condition Buck anticipated: If $\{z_n\}$ is a Blaschke sequence and $B$ the corresponding Blaschke product, then $\{z_n\}$ is interpolating if there exists $\delta > 0$ such that 
 
\begin{eqnarray}\label{Carleson}\inf_n \prod_{m \ne n}\left|\frac{z_m - z_n}{1 - \overline{z_m} z_n}\right| = \inf_n (1 - |z_n|^2)|B^\prime(z_n)| \ge \delta > 0.\end{eqnarray}

In 1961, Shapiro and Shields \cite{SS} considered interpolation in the Hardy space $H^p$ and described it as a weighted interpolation problem. For $f \in H^p$, they defined an operator $T_p$ by
\[T_pf = \left\{f(z_j)(1 - |z_j|^p)^{1/p}\right\}_{j = 1}^\infty,\]

\smallskip
\noindent
and asked when $T_p(H^p) = \ell_p$, where $\ell_p$ is the space of $p$-summable sequences; when $p = \infty$, this is precisely the requirement that $\{z_n\}$ be interpolating for $H^\infty$. In the theorem below, $\delta_{jk} = 1$ if $j = k$ and $0$ otherwise.
 
 \begin{thm}[Shapiro, Shields, 1961]\cite{SS} For $1 \le p \le \infty$ the necessary and sufficient condition for $T_p H^p = \ell^p$ is that there exist functions $f_k \in H^p$ such that 
 \begin{enumerate}
 \item $f_k(z_j)(1 - |z_j|^2)^{1/p} = \delta_{jk}$;
 \item $\|f_k\|_p \le 1/\delta$.
 \end{enumerate}
 \end{thm}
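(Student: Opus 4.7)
My plan is to prove the necessary direction via the open mapping theorem and the sufficient direction by direct construction from the $f_k$'s.

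\textbf{Necessity.} Assume $T_p H^p = \ell^p$. Then $T_p$ is a bounded linear surjection between the Banach spaces $H^p$ and $\ell^p$; boundedness is automatic for $p = \infty$ and is implicit in the hypothesis for $p < \infty$ (being equivalent to the Carleson measure condition on $\{z_n\}$). The open mapping theorem furnishes a $\delta > 0$ such that every $a \in \ell^p$ with $\|a\|_{\ell^p} \le 1$ has a preimage $g \in H^p$ with $\|g\|_p \le 1/\delta$. Applying this to $a = e_k$, the $k$-th standard basis vector of $\ell^p$, yields $f_k \in H^p$ with $T_p f_k = e_k$, which is precisely (1), and $\|f_k\|_p \le 1/\delta$, which is (2).

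\textbf{Sufficiency.} Given the $f_k$ and a target $\{w_k\} \in \ell^p$, the natural candidate interpolant is
\[
f(z) = \sum_k w_k f_k(z),
\]
which by (1) formally satisfies $T_p f = \{w_j\}$. The substantive task is to show this series converges in $H^p$ with $\|f\|_p$ bounded by a constant multiple of $\|w\|_{\ell^p}/\delta$. For $p = 1$ the triangle inequality applied term by term, together with (2), gives the bound immediately. For $1 < p < \infty$ I would argue by a duality pairing against $H^{p'}$ test functions with $1/p + 1/p' = 1$, using (2) and the biorthogonality (1) to bound partial sums uniformly before extracting a weak limit.

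\textbf{Main obstacle.} The hardest case is $p = \infty$, since the series $\sum_k w_k f_k$ need not converge in any sense and (2) bounds only the individual norms. I expect to resolve this by first extracting the classical Carleson lower bound \eqref{Carleson} from (1), (2): for each $k$ the function $\delta f_k \in H^\infty$ has norm at most one and vanishes at every $z_j$ with $j \neq k$, so it factors as $\delta f_k = B_k h_k$, where $B_k$ is the Blaschke product with zeros $\{z_j\}_{j \ne k}$ and $\|h_k\|_\infty \le 1$; evaluating at $z_k$ and using $|h_k(z_k)| \le 1$ yields $|B_k(z_k)| \ge \delta$, which is exactly \eqref{Carleson}. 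From here, Carleson's 1958 theorem supplies the interpolating function directly, built from Blaschke products vanishing on $\{z_j\}_{j \ne k}$ rather than from a naive superposition of the $f_k$'s.
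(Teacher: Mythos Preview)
The paper does not prove this theorem; it is quoted as background with a citation to Shapiro--Shields, so there is no paper proof to compare against. I will therefore evaluate your argument on its own terms.

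Your necessity argument via the open mapping theorem is correct. Your $p=1$ sufficiency and your $p=\infty$ sufficiency (factor $\delta f_k = B_k h_k$, evaluate at $z_k$, obtain $|B_k(z_k)|\ge\delta$, then invoke Carleson) are both fine.

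The gap is in your treatment of sufficiency for $1<p<\infty$. The proposed ``duality pairing against $H^{p'}$ using the biorthogonality (1)'' does not work: condition (1) is a statement about point evaluations of $f_k$ at the $z_j$, not about pairings $\langle f_k,g\rangle$ with arbitrary $g\in H^{p'}$. From (2) alone you only get $|\langle f_k,g\rangle|\le \|g\|_{p'}/\delta$, so bounding $\sum_k w_k\langle f_k,g\rangle$ would require $w\in\ell^1$, not $\ell^p$. There is no mechanism here to upgrade the biorthogonality at the nodes into the kind of almost-orthogonality in $H^p$ that a direct series estimate would need.

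The fix is to do for all $p$ exactly what you did for $p=\infty$. Since $f_k\in H^p$ vanishes on $\{z_j\}_{j\ne k}$ and is not identically zero, that subsequence is Blaschke and $f_k = B_k g_k$ with $\|g_k\|_p=\|f_k\|_p\le 1/\delta$. The standard pointwise bound $|g(z)|(1-|z|^2)^{1/p}\le\|g\|_p$ (obtained by conformal change of variable and subharmonicity of $|g|^p$) applied at $z_k$ gives
\[
\frac{1}{|B_k(z_k)|}=|g_k(z_k)|(1-|z_k|^2)^{1/p}\le \|g_k\|_p\le \frac{1}{\delta},
\]
so $|B_k(z_k)|\ge\delta$ for every $k$. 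Thus \eqref{Carleson} holds, and the known equivalence (Carleson for $p=\infty$, Shapiro--Shields for the passage to general $p$) gives $T_pH^p=\ell^p$. In short: drop the duality sketch and run your Blaschke-factorization argument uniformly in $p$.
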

 
 They note that in $H^1$ an interpolating function can be given explicitly.   If we assume that $\displaystyle\sum_{k=1}^{\infty} |w_k|(1 - |z_k|^2) < \infty$, then letting $B$ denote the Blaschke product corresponding to $\{z_n\}$,
 
\[f(z) = \sum_{k} \frac{B(z)}{B^\prime(z_k)}\left(\frac{1}{z-z_k} + \frac{\overline{z_k}}{1 - \overline{z_k}z}\right)\] solves the problem.

In addition, it was also known that the interpolating function $f$ could be chosen to satisfy 
\[|f(z)| \le \frac{2}{\delta^5}\left(1 + 2 \log\frac{1}{\delta}\right)\sup_n |w_n|.\]

Three natural questions then arise: First, when interpolating in $H^p$ what is the best bound on the $p$-norm of an interpolating function? We see that, as $\delta \to 0$, the corresponding upper bounds approach infinity, which is expected, but as $\delta \to 1$, this point approaches twice what one would hope it would approach. Second, in view of Shapiro and Shield's explicit demonstration of a function in $H^1$ that does the interpolation, another natural question is whether we can exhibit the function explicitly. Finally, we might ask if these two questions can be combined; that is, can we explicitly exhibit the function of best norm that does the interpolation?

\section{Best bounds and best functions}

If we have an interpolating sequence for $H^\infty$, then the map $T: H^\infty \to \ell^\infty$ defined by $T(f) = \{f(z_n)\}$ is a bounded surjective map. If we let $B$ denote the corresponding Blaschke product, then $T$ induces a bijective map $T: H^\infty/BH^\infty \to \ell^\infty$.  Thus, as a consequence of the open mapping theorem, given $w:= \{w_n\} \in \ell^\infty$, there is a constant $C$ such that
\[\|f\|_\infty \le C\|w\|_\infty.\] The smallest such constant is called {\it the constant of interpolation} and depends on the separation constant $\delta$, so it is often denoted by $M(\delta)$. So we are after three things: An estimate on $M(\delta)$, an explicit expression for a function that does the interpolation, and the connection between the two. We will begin with Earl's estimates.

In 1970, J. P. Earl \cite{E} showed that if an interpolating sequence $\{z_n\}$ satisfies \eqref{Carleson} then for any $M$ such that
\begin{eqnarray}\label{Earl}
M > \frac{2 - \delta^2 + 2(1 - \delta^2)^{1/2}}{\delta^2} \sup_{n} |w_n|\end{eqnarray}
there exists a Blaschke product $B$ such that 
\[M e^{i \alpha} B(z_j) = w_j~\mbox{for all}~j.\]

Though Earl's result shows roughly where the zeros of the Blaschke product lie, it does not give an explicit expression for the function that does the interpolation. Now, if we restrict our sequences $\{w_j\}$ so that $\|w\|_\infty \le 1$, let $\delta = \delta(B) = \inf_n (1 - |z_n|^2)|B^\prime(z_n)|$ and use a normal families argument, we obtain a function of norm at most 
\[\frac{2 - \delta^2 + 2(1 - \delta^2)^{1/2}}{\delta^2} = \left(\frac{1 + (1 - \delta^2)^{1/2}}{\delta}\right)^2\] 
that does the interpolation. In particular, Earl's theorem gives us an estimate on $M(\delta)$.

If the original sequence $\{z_j\}$ has the property that the corresponding Blaschke product $C$ satisfies $\delta_j(C) = |C_j(z_j)| \to 1$, then the function that does the interpolation may be chosen to have this property as well; that is, it can be chosen to be a so-called {\it thin} Blaschke product (this result was stated in \cite{DN} with a general plan of attack; details appear in \cite{Mo}). The proof involves adapting the proof of J. P. Earl to this situation. In general, however, the ``best'' function that does the interpolation may not be a unimodular constant times a Blaschke product, but we see that the closer $\delta$ is to $1$, the closer the norm of $f$ is to $1$.  Thus, interpolating Blaschke products for which $\delta_j(B) \to 1$ as $j \to \infty$ would seem to have particularly interesting properties and, indeed, they have been closely studied. Such sequences are called {\it thin sequences} and, if we require that they also be interpolating, they are {\it thin interpolating sequences}. 

\begin{defin} A Blaschke product $B$ with zero sequence $\{z_n\}$ satisfying 
\[\lim_n |B_n(z_n)| = \lim_n  \prod_{m \ne n}\left|\frac{z_m - z_n}{1 - \overline{z_m} z_n}\right| = \lim_n (1 - |z_n|^2)|B^\prime(z_n)| = 1\]
is called a thin Blaschke product and the sequence $\{z_n\}$ is said to be a thin sequence.
\end{defin}

Note that thin sequences may have finitely many points that appear finitely many times, but they cannot repeat infinitely many points. We will assume, unless otherwise stated,  that our thin sequences are interpolating sequences, so that points are distinct. We now present some examples of thin sequences. Recall that for $z, w \in \mathbb{D}$ the pseudohyperbolic distance between $z$ and $w$ in $\mathbb{D}$ is $\rho(z,w) = \left|\frac{z - w}{1 - \overline{w}z}\right|$.

\begin{exam} Every Blaschke sequence has a thin subsequence. \end{exam}

\begin{proof} Let $\varepsilon_j$ be an increasing sequence with $0 < \varepsilon_j < 1$ so that $\delta_j = \prod_{k \ne j} \varepsilon_k \to 1$. Beginning with $z_{n_1} = z_1$, choose $z_{n_2}$ so that $\rho(z_1, z_{n_2}) \ge  \varepsilon_2$. Assuming $z_{n_1}, \ldots, z_{n_j}$ have been chosen, we choose $z_{n_{j + 1}}$ with $\rho(z_{n_k}, z_{n_{j+1}}) > \varepsilon_{j + 1}$ for all $k \le j$. If $C$ is the Blaschke product corresponding to $\{z_{n_j}\}$, then 
$$|C_j(z_{n_j})| \ge \prod_{k \ne j} \rho(z_{n_k}, z_{n_j}) \ge \prod_{k \ne j} \varepsilon_k \to 1,$$ as $j \to \infty$. \end{proof}

The next example below is due to W. Hayman who, in working towards establishing a condition that a sequence be interpolating, proved the following theorem:

\begin{thm}[W. Hayman]\label{Hayman2} A sufficient condition for a sequence of distinct points, $\{z_n\}$, to be interpolating is that 
$$\limsup_{n\to \infty} \frac{1 - |z_{n+1}|}{1 - |z_n|} < 1.$$ If $0 < z_n < 1$ and $\{z_n\}$ is increasing, then the condition is also necessary.\end{thm}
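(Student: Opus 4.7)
The plan is to verify Carleson's criterion \eqref{Carleson} for sufficiency and to use the necessity half of Theorem \ref{Hayman}, namely \eqref{thm:C}, for the converse.

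For sufficiency, set $a_n = 1-|z_n|$. The hypothesis furnishes $r \in (0,1)$ and $N$ with $a_{n+1}/a_n \le r$ for $n \ge N$; discarding the first $N-1$ points preserves interpolation, so I assume this bound holds for every $n$, which forces $a_n/a_m \le r^{n-m}$ whenever $m < n$. I then reduce to the case of positive real $z_n$: starting from $1-\rho(z,w)^2 = (1-|z|^2)(1-|w|^2)/|1-\bar z w|^2$ and the observation that, for fixed moduli, $|1-\overline{z_m}z_n|^2 = 1 + |z_m|^2|z_n|^2 - 2|z_m||z_n|\cos(\arg z_n - \arg z_m)$ is minimized when the two points share a ray from the origin, one obtains $\rho(z_m,z_n) \ge \rho(|z_m|,|z_n|)$. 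Consequently $\prod_{m\neq n}\rho(z_m,z_n) \ge \prod_{m\neq n}\rho(|z_m|,|z_n|)$, and it suffices to treat the real case.

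With $z_n = 1 - a_n \in (0,1)$, a direct computation gives, for $m < n$,
\[\rho(z_m,z_n) \;=\; \frac{a_m - a_n}{a_m + a_n - a_m a_n} \;=\; \frac{1 - a_n/a_m}{1 + a_n/a_m - a_n} \;\ge\; \frac{1-r^{n-m}}{1+r^{n-m}},\]
using $a_n \ge 0$ to drop the term in the denominator and the monotonicity of $t\mapsto (1-t)/(1+t)$ to insert $r^{n-m}$. Hence
\[\prod_{m\neq n}\rho(z_m,z_n) \;\ge\; \prod_{k=1}^\infty \left(\frac{1-r^k}{1+r^k}\right)^2 \;=:\; \delta,\]
which is strictly positive because $\sum_{k\ge 1}\log\frac{1+r^k}{1-r^k}$ converges (the summand is $O(r^k)$). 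Carleson's condition \eqref{Carleson} then delivers the interpolating property.

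For necessity, suppose $\{z_n\}\subset(0,1)$ is increasing with $a_n = 1-z_n$ and $\limsup_n a_{n+1}/a_n = 1$. Pick a subsequence $\{n_k\}$ with $a_{n_k+1}/a_{n_k}\to 1$; then
\[\rho(z_{n_k},z_{n_k+1}) \;=\; \frac{a_{n_k} - a_{n_k+1}}{a_{n_k} + a_{n_k+1} - a_{n_k}a_{n_k+1}} \;\longrightarrow\; 0,\]
so $\inf_n\prod_{m\neq n}\rho(z_m,z_n) = 0$, contradicting \eqref{thm:C}. Thus such a sequence cannot be interpolating, the desired contrapositive. The step I expect to require the most care is the angular-reduction inequality $\rho(z_m,z_n) \ge \rho(|z_m|,|z_n|)$; once that is established, the remaining estimates are routine geometric-series bookkeeping.
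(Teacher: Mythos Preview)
Your argument is correct and matches the paper's indicated route: the paper simply remarks that the result follows from the product separation condition (Theorem~\ref{Hayman}/Carleson's criterion), and the estimates you carry out are exactly those the paper sketches immediately afterward for Example~2, namely the bound $\rho(z_m,z_n)\ge (1-r^{|n-m|})/(1+r^{|n-m|})$. The only cosmetic difference is that you invoke \eqref{Carleson} rather than the sufficient condition in Theorem~\ref{Hayman}, which is harmless since both are stated in the paper; your angular reduction $\rho(z_m,z_n)\ge\rho(|z_m|,|z_n|)$ via $|1-\bar z w|\ge 1-|z||w|$ and the necessity computation are both fine.
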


This theorem follows from the previous theorem due to Hayman, Theorem~\ref{Hayman}, above. A careful inspection of his proof establishes the following.

\begin{exam} Let $\{z_n\}$ be a sequence of distinct points satisfying 
$$\frac{1 - |z_{n+1}|}{1 - |z_n|} \to 0.$$ Then $\{z_n\}$ is a thin sequence.
\end{exam}

Here is a rough idea of why this is true (see \cite[Proposition 4.3 (i)]{CFT} for a different proof): Let $k \in \mathbb{N}$ and suppose 
$$ 1 - |z_{n + 1}| \le c_k (1 - |z_n|)~\mbox{for}~ n \ge k.$$ Note that we are assuming that $c_k \to 0$. 
Now for points $z, w$ in $\mathbb{D}$ the pseudohyperbolic distance between $z$ and $w$ satisfies
$$\rho(z, w) \ge \frac{|z|-|w|}{1 - |\bar{z} w|}.$$  Therefore, if we break the product $\prod_{j \ne k} \left|\frac{z_k - z_j}{1 - \overline{z_j} z_k}\right|$ into two pieces, for $j > k$ we will have
$$1 - |z_j| \le c_k^{j - k}(1 - |z_k|).$$ Consequently $$|z_j| - |z_k| \ge (1 - c_k^{j - k})(1 - |z_k|).$$ But
$$1 - |\overline{z_j} z_k| \le (1 + c_k^{j - k})(1 - |z_k|).$$ Thus, 
$$\prod_{j > k} \left|\frac{z_k - z_j}{1 - \overline{z_j} z_k}\right| \ge \frac{1 - c_k^{j - k}}{1 + c_k^{j-k}}.$$ 

For $j < k$, we have $$1 - |z_{j + 1}| \le c_j (1 - |z_j|).$$ Thus, $$1 - |z_k| \le \prod_{l = j}^{k - 1} c_l(1 - |z_j|)~\mbox{and}~ |z_k| - |z_j| \ge \left(1 - \prod_{l = j}^{k - 1} c_l\right)(1 - |z_j|),$$ while
$$1 - |\overline{z_k} z_j| \le \left(1 + \prod_{l = j}^{k - 1} c_l\right)(1 - |z_j|).$$ So
$$\prod_{j < k} \left|\frac{z_k - z_j}{1 - \overline{z_j} z_k}\right| \ge \frac{1 - \prod_{l = j}^{k - 1} c_l}{1 + \prod_{l = j}^{k - 1} c_l}.$$ 
 From this, we conclude that the sequence $\{z_n\}$ is thin.\\

Our last example is reminiscent of a result of Naftalevitch that says that any Blaschke sequence can be rotated to form an interpolating sequence. Naftalevitch's theorem is also a consequence of the following theorem, which can be found in \cite[Proposition 4.3 (ii)]{CFT}.

\begin{exam} Let $\{z_n\}$ be a Blaschke sequence. Then there is a thin interpolating sequence $\{w_n\}$ with $|w_n| = |z_n|$.\end{exam}

The proof actually constructs the sequence.  Supposing that $\{r_n\}$ is increasing, choose a sequence of positive numbers $b_n$ with $\frac{1 - r_n}{b_{n+1}} \to 0$. Without loss of generality, we may assume that $\displaystyle\sum_{n=1}^{\infty} b_n < \frac{\pi}{2}$. Let $\theta_n = \displaystyle\sum_{k = 1}^n b_k$ and $\lambda_n = r_n e^{i \theta_n}$. Then, as shown in \cite{CFT}, this sequence is thin.

This example is quite similar to an example developed  by Joel H. Shapiro in the context of composition operators. So recall that if $T$ is an analytic self-map of the unit disk, a composition operator on the Hardy space $H^2$ is defined by $C_T(f) = f \circ T$. Once one has checked that the operator is bounded, which it is (see, for example, \cite{S}), it is natural to study when it is compact. It is now known \cite{S} that if $C_T$ is compact, then the angular derivative of $T$ exists at no point of the unit circle. Shapiro's example was developed to show that the angular derivative condition is not sufficient to imply that the operator is compact. That the Blaschke product Shapiro constructed is actually thin was first noticed by D. Suarez and proved in \cite{GG}, but the application of ideas in \cite{CFT} simplifies the proof significantly.

\begin{exam}\cite[p. 184]{Shapiro} Wrap intervals $I_k$ of length $\frac{1}{k}$ about the unit circle, placing a zero of the Blaschke product $B$ at the point $\left(1 - \frac{1}{k^2}\right)e^{i\theta_k}$, where $e^{i \theta_k}$ is the center of the arc $I_k$. Then $B$ is a thin Blaschke product for which the angular derivative does not exist at any point of the unit circle.\end{exam}

Though out of sequence chronologically, we complete this section with answers to the questions we discussed above. The first is an explicit description of the functions that solve the interpolation problem.

P. Beurling \cite{C1} showed that given an interpolating sequence $\{z_j\}$, there exist functions $f_j$ in $H^\infty$ with the property that $f_j(z_j) = 1$ and $f_j(z_k) = 0$ for $j \ne k$ such that 
\begin{eqnarray}\label{beurling} \sup_{z \in \mathbb{D}} \sum_j |f_j(z)| < \infty. \end{eqnarray} In fact, the bound in \eqref{beurling} can be connected to the separation constant $\delta$, but the functions are not given explicitly. Instead, the first explicit description is due to Peter Jones and appeared in 1983, \cite{J}. The functions are given as follows:

\[
f_j(z) := \frac{B_j(z)}{B_j(z_j)}\left(\frac{1-\abs{z_j}^2}{1-\overline{z_j}z}\right)^2 e^{\left(-\frac{1}{2C(\delta)}\sum_{\abs{z_m}\geq\abs{z_j}}\left(\frac{1+\overline{z_m}z}{1-\overline{z_m}z}-\frac{1+\overline{z_m}z_j}{1-\overline{z_m}z_j}\right)(1-\abs{z_m}^2)\right).}
\]

It is easy to see that $f_j(z_j) = 1$ and $f_j(z_k) = 0$ for $j \ne k$. It is, of course, much harder to see that 
\[\sum_{j=1}^{\infty} |f_j(z)| < M~\mbox{for all}~z \in \mathbb{D},\] but it is true and can be shown using a computation that culminates in a Riemann sum that yields the result.  

Thus for any $a\in\ell^\infty$, if we let $\displaystyle 
f(z)=\sum_{j=1}^\infty a_j f_j(z)\in H^\infty(\mathbb{D})
$ we see that
\[f(z_j)=a_j; \quad \abs{f(z)}\leq \|a\|_{\ell^\infty}\left(\sum_{j=1}^\infty \abs{f_j(z)}\right)\leq C(\delta)\|a\|_{\ell^\infty}.
\]

In 2004,  Nicolau, Ortega-Cerd\`{a},  and Seip \cite{NOS}, modifying the explicit formulas given by Jones, were able to provide sharp upper and lower bounds on the interpolation constant.   For thin sequences, the following version of P. Beurling's theorem was proven in \cite{GPW} using the commutant lifting theorem.

\begin{thm}\label{thm:replacement}
Let $\{z_n\}$ be a thin sequence. Then for every $\varepsilon > 0$ there exists $N$  such that for $n \ge N$ there exist $f_n \in H^\infty$ such that for $j, k \ge N$ we have $$f_n(z_n) = 1~\mbox{and}~ f_n(z_k) = 0,~\mbox{for}~ j \ne k,$$ and $$\sup_{z \in \mathbb{D}} \sum_{n \ge N} |f_n(z)| < (1 + \varepsilon).$$ In particular, for every sequence $a \in \ell^\infty$ with $\|a\|_{\ell^\infty} \le 1$ the function $g_a$ defined by $g_a(z):= \sum_{n \ge N} a_n f_n(z) \in H^\infty$ satisfies $\|g_a\|_\infty \le  (1 + \varepsilon)\|a\|_{N, \ell^\infty}$ and $g_a(z_j) = a_j$ for $j \ge N$.
\end{thm}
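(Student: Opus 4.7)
The plan is to transfer the interpolation problem into the model space $K_{B_N} := H^2 \ominus B_N H^2$, where $B_N$ is the Blaschke product with zeros $\{z_n\}_{n \geq N}$, exploit the fact that for a thin sequence the normalized reproducing kernels form an almost-orthonormal basis of $K_{B_N}$ once $N$ is large, and then invoke the commutant lifting theorem (in its Sarason form for the compressed shift) to lift the resulting diagonal operators to $H^\infty$-functions with near-optimal norm.

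The first step is to recast the thin hypothesis as a Hilbert-space statement about the normalized reproducing kernels $\tilde k_{z_n}(w) := (1-|z_n|^2)^{1/2}/(1-\overline{z_n}w)$. The off-diagonal Gram entries $\langle \tilde k_{z_n}, \tilde k_{z_m}\rangle = (1-|z_n|^2)^{1/2}(1-|z_m|^2)^{1/2}/(1-\overline{z_n}z_m)$ are controlled by the thin condition, and a Schur-test argument yields $\|G_N - I\|_{B(\ell^2)} \to 0$ as $N \to \infty$, where $G_N$ is the Gram matrix of $\{\tilde k_{z_n}\}_{n \geq N}$. Fixing $N$ so that $\|G_N - I\| < \varepsilon'$ makes $\{\tilde k_{z_n}\}_{n \geq N}$ a Riesz basis of $K_{B_N}$ with constants $\sqrt{1 \pm \varepsilon'}$.

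For each $a = (a_n)_{n \geq N} \in \ell^\infty$, define $T_a \in B(K_{B_N})$ through the diagonal formula $T_a^* \tilde k_{z_n} = \overline{a_n}\, \tilde k_{z_n}$, extended linearly via the Riesz basis. Since $\tilde k_{z_n}$ is an eigenvector of the compressed backwards shift $S_N^* = P_{K_{B_N}} M_z^*|_{K_{B_N}}$ with eigenvalue $\overline{z_n}$, the operator $T_a^*$ commutes with $S_N^*$, and the Riesz basis inequality gives $\|T_a\|_{B(K_{B_N})} \leq \sqrt{(1+\varepsilon')/(1-\varepsilon')}\,\|a\|_\infty$. The commutant lifting theorem (Sz.-Nagy--Foias / Sarason) then produces $\phi_a \in H^\infty$ with $\phi_a(S_N) = T_a$, $\|\phi_a\|_\infty = \|T_a\|$, and hence $\phi_a(z_n) = a_n$ for every $n \geq N$. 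Choosing $N$ sufficiently large yields $\|\phi_a\|_\infty \leq (1+\varepsilon/2)\|a\|_\infty$.

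Setting $f_n := \phi_{e_n}$ for the standard basis vectors $e_n \in \ell^\infty$ immediately produces the required interpolation conditions and the individual bound $\|f_n\|_\infty \leq 1+\varepsilon/2$. The uniform pointwise sum estimate is obtained by a sign-twisting trick: for each $z \in \D$ and $M \geq N$, choose unimodular $\eta_n(z)$ with $\eta_n(z) f_n(z) = |f_n(z)|$ and let $a(z,M) = (\eta_n(z)\mathbf{1}_{N \leq n \leq M})$. Provided the assignment $a \mapsto \phi_a$ has been chosen linearly, the finite sum $\sum_{N \leq n \leq M} \eta_n(z) f_n$ coincides with $\phi_{a(z,M)}$, and evaluating at $z$ gives
\[
\sum_{N \leq n \leq M} |f_n(z)| = \phi_{a(z,M)}(z) \leq \|\phi_{a(z,M)}\|_\infty \leq 1+\varepsilon;
\]
sending $M \to \infty$ completes the argument.

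The principal obstacle is ensuring that the commutant lifting $a \mapsto \phi_a$ can be made \emph{linear in $a$} without sacrificing the norm bound. The generic commutant lifting theorem returns only a minimal-norm coset representative, which is typically nonlinear; to remedy this one must use a parametric (jointly linear) version of the Sz.-Nagy--Foias dilation applied to the affine family $\{T_a : a \in \ell^\infty\}$, or, equivalently, use the almost-orthonormal structure of Step 1 to produce a bounded linear section of the quotient map $H^\infty \to H^\infty / B_N H^\infty$ on the image of the near-isometric embedding $a \mapsto [\phi_a]$. Once this linearity is secured, the sign-twisting trick above yields the claimed uniform bound.
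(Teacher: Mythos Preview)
Your approach via commutant lifting is indeed how the result was originally established in \cite{GPW}, and the paper acknowledges this; but the proof given in the paper itself is different and more elementary. The paper uses Earl's estimate \eqref{Earl} to bound the interpolation constant of the tail $\{z_j\}_{j\ge N}$ by a quantity tending to $1$ as $\delta\to 1$, and then invokes Theorem~\ref{intestimate} (a general uniform-algebra result: if the interpolation constant is $M$, one can find Beurling-type functions with $\sup_x\sum_j|f_j(x)|\le M^2+\varepsilon$) together with a normal families argument to pass to the full tail. No Hilbert-space machinery is needed.

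Your commutant-lifting step correctly yields the interpolation bound $\|\phi_a\|_\infty\le(1+\varepsilon')\|a\|_\infty$; this is the analogue of what Earl's estimate provides in the paper. The genuine gap is the final step. The sign-twisting trick requires the assignment $a\mapsto\phi_a$ to be \emph{linear}, and commutant lifting does not give that: the minimal-norm lift of $T_a$ is unique only up to $B_N H^\infty$, and choosing it coherently in $a$ is exactly the issue. Neither of your proposed remedies closes the gap. There is no standard ``parametric, jointly linear'' version of Sz.-Nagy--Foias lifting that applies here with the sharp norm. And your second suggestion---build a bounded linear section of $H^\infty\to H^\infty/B_N H^\infty$ with norm $\le 1+\varepsilon$---is circular: having such a section is \emph{equivalent} to having Beurling functions $f_n$ with $\sup_z\sum_n|f_n(z)|\le 1+\varepsilon$ (apply the section to the canonical images of the unit vectors, and conversely set $\sigma([g])=\sum g(z_n)f_n$), so you are assuming the conclusion.

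The clean fix is to do exactly what the paper does at this juncture: once you have the interpolation constant $\le 1+\varepsilon'$ (whether via Earl or via commutant lifting), feed that into Theorem~\ref{intestimate} and a normal families argument to obtain the $f_n$ with $\sup_{z\in\D}\sum_{n\ge N}|f_n(z)|\le(1+\varepsilon')^2$. In other words, your commutant-lifting machinery can replace Earl's estimate, but it cannot by itself replace Theorem~\ref{intestimate}.
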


This theorem can be proved using Earl's estimate, but the proof requires a brief introduction to the maximal ideal space of $H^\infty$. We turn to that introduction now and return to Theorem~\ref{thm:replacement} once we have established the basics.

\section{The maximal ideal space}\label{Maximal Ideal Space}

One of the tools that is most useful in the study of $H^\infty$ as a uniform algebra is  its {\it maximal ideal space}, $M(H^\infty)$, or the space of nonzero multiplicative linear functionals. It is called the maximal ideal space because the kernel of a nonzero multiplicative linear functional is a maximal ideal and, conversely, every maximal ideal is the kernel of a multiplicative linear functions.

When endowed with the weak-$\ast$ topology, $M(H^\infty)$ is a compact Hausdorff space.  By studying certain partitions of the maximal ideal space, mathematicians were able to shed light on the behavior of functions in $H^\infty$.  The disk, $\mathbb{D}$, can be identified with a subset of the maximal ideal space, by identifying the point $z$ with the functional that is point evaluation at $z$ and Carleson's corona theorem says that the disk is dense in $M(H^\infty)$. 

That, plus the following theorem, are the last two ingredients that we need in our proof of Theorem~\ref{thm:replacement}.

\begin{thm}\label{intestimate} Let $A$ be a uniform algebra on a compact space $X$ and let $\{x_1, \ldots, x_n\}$ be a finite set of points in $X$. If
$$M = \sup_{\|a\|_\infty \le 1} \inf\left\{\|g\|_{A}: g \in A, g(x_j) = a_j, j = 1, 2, \ldots, n\right\},$$ then for every $\varepsilon > 0$ there are functions $f_j \in A$ for which 
$$f_j(x_j) = 1 ~\mbox{and}~ f_j(x_k) = 0 \quad \mbox{for}~k\ne j$$ and 
$$\sup_{x \in X} \sum_{j = 1}^n |f_j(x)| \le M^2 + \varepsilon.$$
\end{thm}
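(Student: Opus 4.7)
My plan is to translate the conclusion into a statement about a linear section of the evaluation map, and then bound its operator norm via two applications of the hypothesis---once directly and once through its dual form.

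Setup. Let $T\colon A \to \ell^\infty_n$ be the (contractive, surjective) evaluation map $T(g) = (g(x_j))_j$, and fix $\eta > 0$ so that $(M+\eta)^2 \le M^2 + \varepsilon$. Any tuple $(f_j)_{j=1}^n \subset A$ with $f_j(x_k) = \delta_{jk}$ determines a linear section $L\colon \ell^\infty_n \to A$ of $T$ via $L(a) := \sum_j a_j f_j$, and the desired conclusion is exactly the operator norm bound
\[
\|L\|_{\ell^\infty_n \to A} = \sup_{\|a\|_\infty \le 1}\Bigl\|\sum_j a_j f_j\Bigr\|_A = \sup_{y\in X}\sum_{j=1}^n |f_j(y)| \le M^2 + \varepsilon.
\]
Applying the hypothesis to each standard basis vector $e_j$ immediately furnishes raw peaking functions $g_j \in A$ with $g_j(x_k) = \delta_{jk}$ and $\|g_j\|_A \le M+\eta$, but a generic linear extension built from them has operator norm that may be as large as $n(M+\eta)$, which is too weak.

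Duality step. The hypothesis may be recast (by Hahn--Banach) as the dual inequality $\|c\|_{\ell^1_n} \le M \sup_{\|g\|_A \le 1}|\sum_j c_j g(x_j)|$ for every $c \in \mathbb{C}^n$. Applied to $c = (f_j(y))_j$ at each $y \in X$, this yields the pointwise control
\[
\sum_{j} |f_j(y)| \;\le\; M \sup_{\|g\|_A \le 1}\Bigl|\sum_j g(x_j) f_j(y)\Bigr| \;\le\; M\,\|L\circ T\|_{A \to A}.
\]
Thus the problem reduces to constructing the $f_j$'s so that the composite operator $L\circ T\colon A \to A$, $g \mapsto \sum_j g(x_j)f_j$, has operator norm at most $M+\eta$---geometrically, so that the \emph{specific} linear interpolant $\sum_j g(x_j) f_j$ is uniformly within a factor $M+\eta$ of an optimal interpolant of $g$ at the $x_k$'s, for every $g$ with $\|g\|_A \le 1$.

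Linear-selection step. To produce such a lift, I would use a compactness/selection argument on $A^n$. The set of admissible tuples
\[
\bigl\{(f_j)_{j} \in A^n : f_j(x_k) = \delta_{jk},\ \|f_j\|_A \le M+\eta\bigr\}
\]
is non-empty (by the setup step), convex, and weak-$\ast$ compact, using that $A$ sits inside $C(X)$ with weak-$\ast$ compact unit ball. Since the quotient $A/\ker T$ is $n$-dimensional and $M$-equivalent to $\ell^\infty_n$, a finite-dimensional minimax (or Hahn--Banach-style extension) argument produces within this set a tuple for which the corresponding $L$ satisfies $\|L\circ T\|_{A\to A}\le M+\eta$; combined with the duality bound, this gives $\sup_y \sum_j |f_j(y)| \le M(M+\eta) \le M^2 + \varepsilon$, as required.

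The main obstacle is precisely this linear-selection step. The hypothesis supplies, for each individual target $a$, an interpolant of norm at most $M+\eta$, but these optimal interpolants do \emph{not} generally depend linearly on $a$; in fact a linear lift of operator norm $M+\eta$ does not exist in general, and the best one can hope for is close to $M^2$. The $M^2$ thus reflects paying once for using the hypothesis to pick the interpolants and a second time through the dual identity that converts pointwise values $(f_j(y))_j$ into an $\ell^1$-sum, and the uniform-algebra structure of $A$ (providing the weak-$\ast$ compactness of norm-bounded sets) together with the finite-dimensionality of $A/\ker T$ is what allows the compactness argument to close.
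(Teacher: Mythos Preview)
The paper does not actually prove this theorem; it is stated as a classical fact (essentially Garnett, \emph{Bounded Analytic Functions}, Chapter VII) and then used as a black box in the proof of Theorem~\ref{thm:replacement}. So there is no ``paper's own proof'' to compare against, and your proposal has to stand on its own.

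Your duality step is correct and is indeed the natural way to see why the exponent $2$ appears: the inequality $\|c\|_{1}\le M\,\|\sum_j c_j\delta_{x_j}\|_{A^*}$ is exactly the dual form of the interpolation hypothesis, and applying it with $c_j=f_j(y)$ reduces the problem to bounding $\|L\circ T\|$.

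The gap is the linear-selection step. You assert that a section $L$ with $\|L\circ T\|_{A\to A}\le M+\eta$ can be produced by ``compactness/selection'' and a ``finite-dimensional minimax'', but no argument is given. Two concrete problems:
\begin{itemize}
\item The compactness you invoke does not exist: a uniform algebra $A\subset C(X)$ is not a dual space in general, so its closed balls are not weak-$\ast$ compact. Passing to $C(X)^{**}$ or to the pointwise topology would give limits that need not lie in $A$.
\item More seriously, the quantity $\|L\circ T\|$ equals the norm of $L$ viewed as a section of the quotient map $q\colon A\to A/\ker T$ (with the quotient norm). When $M=1$ the quotient norm coincides with $\|\cdot\|_\infty$, so your claim ``$\|L\circ T\|\le 1+\eta$'' is literally the statement $\sup_y\sum_j|f_j(y)|\le 1+\eta$, i.e.\ the conclusion of the theorem. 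Thus in this case the selection step is not a reduction at all but a restatement of what is to be proved; for general $M$ it is a claim at least as strong as the theorem, and you have not supplied a mechanism for it.
\end{itemize}
Your closing paragraph in effect concedes this: you note that optimal interpolants do not depend linearly on the data and that the best linear lift has norm near $M^2$, but you do not explain why the \emph{different} quantity $\|L\circ T\|$ should nevertheless be forced down to $M+\eta$.

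The standard proof avoids this obstacle entirely. One fixes, for each $\omega\in\mathbb{T}^n$, an interpolant $h_\omega\in A$ with $h_\omega(x_j)=\omega_j$ and $\|h_\omega\|\le M+\eta$, arranges (using a partition of unity on the compact torus $\mathbb{T}^n$ and the finite-dimensionality of $A/\ker T$) that $\omega\mapsto h_\omega$ is continuous, and then sets
\[
f_j(y)=\int_{\mathbb{T}^n}\overline{\omega_j}\,h_\omega(y)\,dm(\omega).
\]
The orthogonality of characters gives $f_j(x_k)=\delta_{jk}$, and the bound $\sum_j|f_j(y)|\le (M+\eta)^2$ follows from a second application of the hypothesis combined with Fubini. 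This is where the two factors of $M$ genuinely come from; your duality identity captures one of them, but the second requires the averaging construction (or an equivalent device), not an abstract selection principle.
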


In our case, the compact space $X$ is $M(H^\infty)$ and our points $x_j$ will be $z_j \in \mathbb{D}$.  Note that in this case -- that is, when $A = H^\infty$ -- a normal families argument implies that we can find a sequence $f_j$ such that $f_j(z_j) = 1$, $f_j(z_k) = 0$ and $\sum_{z \in \mathbb{D}} |f_j(z)| \le M^2$.

\begin{proof}[Proof of Theorem~\ref{thm:replacement}] 
Recall that $\delta_n = |B_n(z_n)|$. Let $\varepsilon > 0$ be given. Then, since $\delta_n \to 1$, there exists $N$ such that 
\begin{equation}\label{delta}
\left(\frac{2 - \delta_n^2 + 2(1 - \delta_n^2)^{1/2}}{\delta_n^2}\right)^2 < 1 + \varepsilon ~\mbox{for}~ n \ge N.
\end{equation}
 Consider the sequence $\{z_j\}_{j \ge N}$ and let $\delta$ be the separation constant for this sequence. By Earl's estimate, \eqref{Earl}, we know that given $w \in \ell^\infty$ with $\|w\|_\infty \le 1$, there exists a function $f \in H^\infty$ such that $f(z_j) = w_j$ for all $j \ge N$ and $\|f\|_{\infty} \le \frac{2 - \delta^2 + 2(1 - \delta^2)^{1/2}}{\delta^2}$. By \eqref{delta} we see that $\|f\|_\infty \le 1 + \varepsilon$. Therefore, by Theorem~\ref{intestimate} and a normal families argument, we know that there are functions $f_{j}$ such that for $j, k \ge N$ we have
$$f_{j}(z_j) = 1, f_{j}(z_k) = 0~\mbox{for $j \ne k$ and}~ \sup_{z \in \mathbb{D}} \sum_{j = N}^\infty |f_{j}(z)| \le  1 + \varepsilon.$$ 
\end{proof}

The maximal ideal space is particularly useful here, but in spite of our familiarity with the open dense set $\mathbb{D}$, the set $M(H^\infty) \setminus \mathbb{D}$ is difficult to understand. As luck would have it, and as Sarason showed, the space $H^\infty + C$, consisting of sums of (boundary) functions in $H^\infty$ and continuous functions on the unit circle $\mathbb{T}$ is a closed subalgebra \cite{S} of $L^\infty$ and the maximal ideal space is $M(H^\infty + C) = M(H^\infty) \setminus \mathbb{D}$ -- precisely the set we don't understand well.

It turns out that the analytic structure that we have in $\mathbb{D}$ does, in a certain sense to be made precise, carry over to $M(H^\infty)$ as we see from the {\it Gleason parts}.   For $\varphi_1, \varphi_2 \in M$ {\it the pseudohyperbolic distance} is
\[\rho(\varphi_1, \varphi_2) = 
\sup\{|\hat{f}(\varphi_2)| : f \in H^\infty, {\|f\|}_\infty \le 1, \hat{f}(\varphi_1) = 0\}.\] Points are in the same Gleason part if $\rho(\varphi_1, \varphi_2) < 1$ and this defines an equivalence relation on $M(H^\infty)$; the equivalence classes are the Gleason parts. One equivalence class is the unit disk and the others lie in $M(H^\infty+C)$. In trying to understand the parts in $M(H^\infty) \setminus \mathbb{D}$, Hoffman considered, for each point $\alpha \in \mathbb{D}$, the map $L_\alpha(z) = (z + \alpha)(1 + \overline{\alpha}z)^{-1}$. Given a net of points $(\alpha_\beta)$ converging to a point $\varphi$ in $M(H^\infty)$, the corresponding maps $L_{\alpha_\beta}$ converge to a one-to-one map $L_\varphi: \mathbb{D} \to M(H^\infty)$ and $L_\varphi(\mathbb{D}) = P(\varphi)$, the Gleason part of $\varphi$. The map $L_\varphi$ imparts an analytic structure on $P(\varphi)$: If $f \in H^\infty$, we can define the Gelfand transform of $f$, denoted $\hat{f}$, on $M(H^\infty)$ by $\hat{f}(\varphi) = \varphi(f)$ and then $\hat{f} \circ L_\varphi$ is an analytic function on $\mathbb{D}$. It is customary to drop the ``hat'' and refer to $f$ even when using the function $\hat{f}$. 

One of Hoffman's goals was to show that a point $\varphi \in M(H^\infty + C)$ is in the closure of an interpolating sequence if and only if the map $L_\varphi$ is not constant. Though Hoffman's work allowed mathematicians to use the analytic structure of the parts as a tool, that does not mean that the parts are tractable. For example, parts may or may not look like the disk, but the points $\psi$ that lie in the closure of a thin part are always homeomorphic to the disk, as noted by Hoffman in his seminal paper, \cite{Ho}.

\begin{prop}\label{Hoffman} Let $\{\alpha_n\}$ denote a thin sequence and $B$ the corresponding Blaschke product. Then for any point $\varphi \in M(H^\infty + C)$ in the closure of $\{\alpha_n\}$, the Gleason part is homeomorphic to the unit disk.
\end{prop}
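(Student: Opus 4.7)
The plan is to show that Hoffman's map $L_\varphi:\mathbb{D}\to P(\varphi)$ is itself the required homeomorphism. Since $L_\varphi$ is already known to be continuous and surjective onto $P(\varphi)$, the task reduces to producing a continuous left inverse, and the Gelfand transform $\hat B$ (rescaled by a suitable unimodular constant) is the natural candidate; the entire argument reduces to showing that $\hat B\circ L_\varphi$ is a rotation of $\mathbb{D}$.

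To carry this out, I would first pick a subnet $(\alpha_{n_\beta})$ of $\{\alpha_n\}$ with $\alpha_{n_\beta}\to\varphi$ in the Gelfand topology, and then, via a diagonal/normal-families extraction, further refine it so that for every $f\in H^\infty$ the analytic functions $f\circ L_{\alpha_{n_\beta}}$ converge locally uniformly on $\mathbb{D}$ (necessarily to $\hat f\circ L_\varphi$); this is enough to conclude that $L_\varphi$ is continuous from $\mathbb{D}$ into $M(H^\infty)$ and, by Hoffman's theorem, that $L_\varphi(\mathbb{D})=P(\varphi)$. Specializing to $f=B$, the thin condition gives $B(\alpha_{n_\beta})\to 0$ and $|(B\circ L_{\alpha_{n_\beta}})'(0)|=(1-|\alpha_{n_\beta}|^2)|B'(\alpha_{n_\beta})|\to 1$. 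Local uniform convergence transfers both the value and the derivative at $0$ to the limit, so $\hat B\circ L_\varphi:\mathbb{D}\to\overline{\mathbb{D}}$ vanishes at the origin and has derivative of unit modulus there; the equality case of Schwarz's lemma then forces $(\hat B\circ L_\varphi)(z)=cz$ for some $|c|=1$.

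With this identity in hand, set $\Phi(\psi):=c^{-1}\hat B(\psi)$ for $\psi\in P(\varphi)$. Since $\hat B$ is continuous on $M(H^\infty)$, $\Phi$ is continuous, and $\Phi\circ L_\varphi=\mathrm{id}_\mathbb{D}$ exhibits $L_\varphi$ as an injective continuous surjection with continuous inverse $\Phi$, i.e., a homeomorphism of $\mathbb{D}$ onto $P(\varphi)$. The step I anticipate as most delicate is the joint subnet extraction in the middle paragraph: one needs a \emph{single} subnet along which the $L_{\alpha_{n_\beta}}$ stabilize pointwise in the weak-$\ast$ topology of $M(H^\infty)$ while also delivering local uniform convergence of $\{B\circ L_{\alpha_{n_\beta}}\}$ on $\mathbb{D}$ with matching values and derivatives at $0$. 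This is a routine normal-families argument, but deserves care because $L_\varphi$ is a priori only a pointwise weak-$\ast$ limit into a compact Hausdorff space rather than an object whose analytic properties come for free.
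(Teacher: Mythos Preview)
Your proposal is correct and follows essentially the same route as the paper's proof: both arguments pass from $B\circ L_{\alpha_n}$ to $\hat B\circ L_\varphi$, observe that the limit is an analytic self-map of $\mathbb{D}$ fixing $0$ with derivative of unit modulus there, invoke the equality case of Schwarz's lemma to get $\hat B\circ L_\varphi(z)=\lambda z$, and read off that $\bar\lambda\,\hat B$ furnishes the continuous inverse to $L_\varphi$. The paper simply takes the convergence $L_{\alpha_\beta}\to L_\varphi$ (and the resulting analyticity of $\hat f\circ L_\varphi$) as part of Hoffman's background machinery rather than re-deriving it, but the substance is identical.
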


\begin{proof} By assumption, we know that $\displaystyle\lim_n(1 - |\alpha_n|^2)|B^\prime(\alpha_n)| = 1$. Then $(\hat{B} \circ L_{\alpha_n})(0) = 0$ for all $n$ and $|(\hat{B} \circ L_{\alpha_n})^\prime(0)| = |B^\prime(\alpha_n)|(1 - |\alpha_n|^2) \to 1$. So whatever $\hat{B} \circ L_\varphi$ is, we know that $(\hat{B} \circ L_\varphi): \mathbb{D} \to \mathbb{D}$, $\hat{B} \circ L_\varphi(0) = 0$, and $|(\hat{B} \circ L_\varphi)^\prime(0)| = 1$. Since $\hat{B} \circ L_\varphi$ is also analytic, Schwarz's lemma shows that $\hat{B} \circ L_\varphi(z) = \lambda z$ for some $\lambda$ of modulus $1$. Therefore if $\varphi$ is in the closure of a thin sequence, $L_\varphi$ is a homeomorphism and its inverse is a (unimodular) constant multiple of $\hat{B}$. \end{proof}

To the best of our knowledge, this is the first appearance of thin sequences in the literature, and this has interesting implications.  By Hoffman's work, it turns out that a Blaschke product for which the zeros form a thin interpolating sequence are  {\it indestructible}; that is, if you take an automorphism $T_a(z) = \lambda \frac{z - a}{1 - \overline{a} z}$ (with $a \in \mathbb{D}$ and $\lambda$ in the unit circle, $\mathbb{T}$) and consider $T_a \circ B$, this will again be a thin Blaschke product (though finitely many zeros may be repeated).  This fact and Proposition~\ref{Hoffman} imply that if a thin Blaschke product is of modulus less than one on a part, then it has exactly one zero on that part.

\begin{prop} If $B = B_1 B_2$ is a factorization of a thin Blaschke product, then for each $\varphi \in M(H^\infty + C)$ either $|B_1\circ L_\varphi(z)| = 1$ for all $z \in \mathbb{D}$ or $|B_2 \circ L_\varphi(z)| = 1$ for all $z \in \mathbb{D}$. \end{prop}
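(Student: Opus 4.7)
The plan is to fix $\varphi\in M(H^\infty+C)$ and analyze the three bounded analytic self-maps of $\mathbb{D}$ given by $h = B\circ L_\varphi$, $h_1 = B_1\circ L_\varphi$, and $h_2 = B_2\circ L_\varphi$. They satisfy $h = h_1 h_2$ and each has sup-norm at most $1$. By the maximum modulus principle, the conclusion $|h_i|\equiv 1$ on $\mathbb{D}$ is equivalent to $h_i$ being a unimodular constant, so the target is to show that at least one of $h_1,h_2$ is a unimodular constant. The argument then splits into two cases according to the behavior of $B$ on the Gleason part $P(\varphi)$.

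If $|B(\varphi)| = 1$, then $h(0)=B(\varphi)$ attains the maximum of $|h|$ at an interior point, so by the maximum modulus principle $h$ is itself a unimodular constant. Then $|h_1(z)||h_2(z)|\equiv 1$ together with $|h_i|\le 1$ forces $|h_i|\equiv 1$ for both $i$, and both conclusions hold. Otherwise $|B(\varphi)|<1$, and the observation recorded in the paragraph just before the proposition --- that a thin Blaschke product of modulus less than $1$ on a part has exactly one zero on that part --- produces a point $\psi\in P(\varphi)\cap\overline{\{z_n\}}$ with $B(\psi)=0$. Since $\psi$ and $\varphi$ lie on the same Gleason part, Hoffman's analysis gives $L_\psi = L_\varphi\circ M$ for some M\"obius automorphism $M$ of $\mathbb{D}$, so $|B_i\circ L_\varphi|\equiv 1$ on $\mathbb{D}$ if and only if $|B_i\circ L_\psi|\equiv 1$ on $\mathbb{D}$. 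It therefore suffices to prove the conclusion when $\varphi$ itself is in $\overline{\{z_n\}}$, and at that point Proposition~\ref{Hoffman} yields $h(z)=(B\circ L_\varphi)(z)=\lambda z$ for some $\lambda\in\mathbb{T}$.

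It remains to deduce from $h_1(z)h_2(z)=\lambda z$, together with $\|h_i\|_\infty\le 1$, that one of $h_1,h_2$ is a unimodular constant. Evaluating at $0$ shows $h_1(0)h_2(0)=0$; relabeling if necessary, assume $h_1(0)=0$. By Schwarz's lemma, $|h_1(z)|\le|z|$ on $\mathbb{D}$, so the function $g_1(z):=h_1(z)/z$ extends analytically to $\mathbb{D}$ with $\|g_1\|_\infty\le 1$. Substituting back, $z\,g_1(z)h_2(z)=\lambda z$, and dividing by $z$ gives $g_1(z)h_2(z)\equiv \lambda$ on $\mathbb{D}\setminus\{0\}$, hence on all of $\mathbb{D}$ by continuity. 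Since $|g_1|,|h_2|\le 1$ and their product has modulus $1$ everywhere, both $g_1$ and $h_2$ are identically of modulus $1$, so both are unimodular constants. In particular $h_2$ is a unimodular constant, which gives $|h_2|\equiv 1$ on $\mathbb{D}$ and completes the proof. I expect the main obstacle to be the case $|B(\varphi)|<1$: locating the zero $\psi$ of $B$ on $P(\varphi)$, justifying the M\"obius change of parametrization from $L_\varphi$ to $L_\psi$ that reduces to $\psi\in\overline{\{z_n\}}$, and then executing the Schwarz-lemma step cleanly. Once that reduction is in place, the factorization argument is short.
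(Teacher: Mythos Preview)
Your proof is correct, but it follows a different route from the paper's. The paper argues by contradiction: assuming both $|B_1(\varphi)|<1$ and $|B_2(\psi)|<1$ for some $\psi\in P(\varphi)$, it uses the \emph{indestructibility} of each factor (that $T_{B_i(\cdot)}\circ B_i$ is again thin) together with Proposition~\ref{Hoffman} to conclude that each $B_i$, viewed on $P(\varphi)$ via $L_\varphi$, is a degree-one Blaschke factor; hence $B=B_1B_2$ has two zeros on $P(\varphi)$, contradicting the ``exactly one zero'' remark preceding the proposition. You instead reduce to the situation $B\circ L_\varphi(z)=\lambda z$ by moving to the unique zero $\psi$ of $B$ on the part and reparametrizing via $L_\psi=L_\varphi\circ M$, and then run a clean Schwarz-lemma factorization of $\lambda z=h_1h_2$. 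Your argument has the virtue of never needing that subfactors of a thin product are themselves thin, but it does invoke two background facts you should state explicitly: that for an interpolating Blaschke product the zero set of $\hat{B}$ on $M(H^\infty)$ is exactly $\overline{\{z_n\}}$ (so your $\psi$ really lies in $\overline{\{z_n\}}$), and Hoffman's relation $L_\psi=L_\varphi\circ M$ for two points on the same analytic-disk part. Incidentally, the reduction step can be bypassed: using indestructibility of $B$ alone, one gets directly that $B\circ L_\varphi$ is a single Blaschke factor whenever $|B(\varphi)|<1$, and the same Schwarz--Pick factorization argument applies to $h_1h_2=$ (degree-one Blaschke factor).
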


\begin{proof} Let $\varphi \in M(H^\infty + C)$. If $|B_1(\varphi)| < 1$, then $C_1:=T_{B_1(\varphi)}\circ B_1$ is still thin and $\hat{C_1} \circ L_\varphi(z) = \lambda_1 z$. Similarly, if $|B_2(\psi)| < 1$ for some $\psi \in P(\varphi)$, then for the corresponding Blaschke product $C_2$, we have that $\hat{C_2} \circ L_\psi(z) = \lambda_2 z$. In particular, $B$ would have two zeros (counted according to multiplicity) in $P(\varphi)$ and that is impossible.
\end{proof}

Thus, thin sequences have zeros that are pseudo-hyperbolically far apart in the disk as well as in $M(H^\infty + C)$ and it is this separation that made them particularly interesting sequences for the study of interpolation. 

\section{Thin sequences, interpolation and uniform algebras}

The strength of the separation of points in the closure of thin sequences is illustrated by a result of T. Wolff. To place it in its proper context, we need to understand what happened in the study of closed subalgebras of $L^\infty$ containing $H^\infty$; the so-called {\it Douglas algebras}, in honor of R. G. Douglas who conjectured that every such algebra is generated by $H^\infty$ and the complex conjugates of inner functions invertible in the algebra. That this is true for $H^\infty + C$ is Sarason's theorem: the only invertible inner functions in $H^\infty + C$ are finite Blaschke products and Sarason showed that $H^\infty + C = H^\infty[\overline{z}]$. After Douglas made his conjecture, he and Rudin \cite{DR} showed that $L^\infty$ is of the right form. The final result was even better than what Douglas conjectured: Chang and Marshall \cite{Ch, Ma} showed that every such algebra was generated by $H^\infty$ and the conjugates of the interpolating Blaschke products invertible in that algebra.

The proof is divided into two pieces. Chang showed that if two Douglas algebras had the same maximal ideal space, then they were the same Douglas algebra,while Marshall showed that if $A$ is a Douglas algebra and $A_I$ is the (closed) algebra generated by $H^\infty$ and the complex conjugates of the interpolating Blaschke products invertible in $A$, then the maximal ideal space of $A$, denoted $M(A)$, is equal to the maximal ideal space of $A_I$. Their work requires an understanding of how elements $\varphi \in M(H^\infty)$ ``work.''  Each $\varphi \in M(H^\infty)$ can be defined by integration against a positive measure with closed support in the maximal ideal space of $L^\infty$; that is,
$$\varphi(f) = \int_{{\mbox{\tiny supp}} \,{\varphi}} f d\mu_\varphi,$$ and given a Douglas algebra, we may think of $M(A)$ as a subset of $M(H^\infty)$; $M(A)$ can be identified with the multiplicative linear functionals in $M(H^\infty)$ for which the representing measures are multiplicative on $A$. (See, for example, \cite[Chapter IX]{Garnett}.)

Sticking with the uniform algebra point of view for a moment, one might wonder what happens when one looks at the closed algebra $A$ of $H^\infty$ and the conjugates of all thin interpolating Blaschke products. Hedenmalm \cite{H} showed that an inner function is invertible in $A$ if and only if it is a finite product of thin interpolating Blaschke products.

As this suggests, thin interpolating sequences are very well behaved. Wolff and Sundberg \cite{W}, \cite{SW} showed, among other things, that these sequences are the interpolating sequence for the (very small) algebra $QA = \overline{H^\infty + C} \cap H^\infty$ (here the bar denotes the complex conjugate).  This algebra acts, in many ways, like the disk algebra (for this, \cite{W} is a good resource). We start with the algebra of quasi-continuous functions: let $QC = (H^\infty + C) \cap \overline{H^\infty + C}$. The algebra $QA$ is then $QA: = QC \cap H^\infty$. The theorem we concentrate on here is the following:

\begin{thm}[Wolff, Wolff-Sundberg] \label{SW}The following are equivalent for an interpolating sequence $\{z_n\}$.
\begin{enumerate}
\item For any $\{\lambda_n\} \in \ell^\infty$ there is $f \in QA$ with $f(z_n) = \lambda_n$;
\item For any $\{\lambda_n\} \in \ell^\infty$, $\varepsilon > 0$, then there is an $f \in H^\infty$ with $\|f\|_{\infty} < \displaystyle\limsup_{n\to\infty} |\lambda_n| + \varepsilon$ and $f(z_n) = \lambda_n$ all but finitely many $n$;
\item $\displaystyle\lim_{n \to \infty} \prod_{m \ne n} \left| \frac{z_n - z_m}{1 - \overline{z_m} z_n}\right| = 1$.
\end{enumerate} 
\end{thm}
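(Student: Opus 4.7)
The plan is to prove the cycle $(3) \Rightarrow (2) \Rightarrow (1) \Rightarrow (3)$, leaning on the tools developed earlier. For $(3) \Rightarrow (2)$, I apply Theorem~\ref{thm:replacement} directly: given $\{\lambda_n\} \in \ell^\infty$ and $\varepsilon > 0$, set $L := \limsup_n |\lambda_n|$, pick $\eta > 0$ with $(L+\eta)(1+\eta) < L+\varepsilon$, and take $N$ large enough that the functions $\{f_n\}_{n \ge N}$ of Theorem~\ref{thm:replacement} satisfy $\sup_{z \in \mathbb{D}} \sum_{n \ge N}|f_n(z)| < 1+\eta$ and also $|\lambda_n| < L+\eta$ for $n \ge N$. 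Then $g(z) := \sum_{n \ge N} \lambda_n f_n(z)$ lies in $H^\infty$, interpolates $\lambda$ at every $n \ge N$, and satisfies $\|g\|_\infty < L + \varepsilon$.

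For $(2) \Rightarrow (1)$, I would iterate (2) geometrically. Start with $\lambda^{(0)} := \lambda$; having produced $\lambda^{(k)}$, apply (2) with tolerance $2^{-(k+1)}$ to obtain $h_k \in H^\infty$ with $\|h_k\|_\infty \le \limsup_n |\lambda^{(k)}_n| + 2^{-(k+1)}$ and $h_k(z_n) = \lambda^{(k)}_n$ for all $n \ge N_k$. The residue $\lambda^{(k+1)}_n := \lambda^{(k)}_n - h_k(z_n)$ vanishes for $n \ge N_k$, so $\limsup_n |\lambda^{(k+1)}_n| = 0$ and consequently $\|h_k\|_\infty < 2^{-(k+1)}$ for $k \ge 1$. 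Arranging $N_0 \le N_1 \le \cdots$, the sum $f_\infty := \sum_k h_k$ converges in $H^\infty$ with $f_\infty(z_n) = \lambda_n$ for $n \ge N_0$, and the remaining finite discrepancy is cleared by adding $B_0 \cdot p$, where $p$ is a polynomial that interpolates the finite data and $B_0$ is the Blaschke product of the tail $\{z_n\}_{n \ge N_0}$. To keep everything inside $QA$ one needs each $h_k$ and $B_0$ to be in $QA$; this hinges on the fact that a thin interpolating Blaschke product is quasicontinuous, itself a consequence of Proposition~\ref{Hoffman} together with the characterization of $QC$ as those elements of $H^\infty+C$ that are constant on Gleason parts of $M(H^\infty+C)$.

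For $(1) \Rightarrow (3)$, I argue by contradiction. If the interpolating sequence $\{z_n\}$ fails to be thin, then, passing to subsequences, there exist disjoint index sequences $\{n_k\}, \{m_k\}$ and $\rho_0 < 1$ with $\rho(z_{n_k}, z_{m_k}) \le \rho_0$. Subsequential limits $\varphi_1$ of $\{z_{n_k}\}$ and $\varphi_2$ of $\{z_{m_k}\}$ in $M(H^\infty+C)$ must be distinct (otherwise $H^\infty$-interpolation itself would fail) while still satisfying $\rho(\varphi_1, \varphi_2) \le \rho_0 < 1$, so they lie in a common Gleason part. Choosing $\lambda_n = 1$ on $\{n_k\}$ and $\lambda_n = 0$ on $\{m_k\}$ and invoking (1) yields $f \in QA$ with $f(\varphi_1) = 1$ and $f(\varphi_2) = 0$, contradicting the fact that functions in $QA \subseteq QC$ are constant on Gleason parts of $M(H^\infty+C)$. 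The main obstacle in this plan is the implication $(2) \Rightarrow (1)$: the geometric iteration is clean, but verifying that it can be executed inside $QA$ rests on the deeper algebraic input that thin interpolating Blaschke products actually live in $QA$.
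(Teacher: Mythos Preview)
The paper does not actually prove Theorem~\ref{SW}; it is stated with attribution to Wolff and Sundberg--Wolff and then used as a tool. So there is no ``paper's own proof'' to compare against, and I will simply assess your plan.

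Your arguments for $(3)\Rightarrow(2)$ and $(1)\Rightarrow(3)$ are essentially sound. For $(3)\Rightarrow(2)$, Theorem~\ref{thm:replacement} (whose proof in the paper rests only on Earl's estimate and Theorem~\ref{intestimate}, not on Theorem~\ref{SW}) gives exactly what you need. For $(1)\Rightarrow(3)$, the key fact you use---that a $QA$ function is constant on every Gleason part of $M(H^\infty+C)$---is correct: if $f\in QA$ then $f\circ L_\varphi$ is analytic, while $\bar f\in H^\infty+C$ forces $\overline{f\circ L_\varphi}$ to be analytic up to an additive constant, so $f\circ L_\varphi$ is constant. (Be careful, though: you phrase this as a \emph{characterization} of $QC$, which it is not; you only need and only have the forward implication for $QA$.) The contradiction then goes through once you take cluster points of $\{z_{n_k}\}$ and $\{z_{m_k}\}$ along the \emph{same} subnet so that the pseudohyperbolic bound survives in the limit.

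The genuine gap is in $(2)\Rightarrow(1)$, and you have correctly located it yourself. Two separate problems arise. First, your proposed fix---``thin interpolating Blaschke products lie in $QA$''---presupposes that the sequence is thin, i.e.\ condition~$(3)$; but at this point in the cycle you only have~$(2)$, so the argument is circular as written. Second, even granting~$(3)$, the functions $h_k$ handed to you by hypothesis~$(2)$ are arbitrary $H^\infty$ functions, not constant multiples of Blaschke products, so knowing that thin Blaschke products are in $QA$ does not by itself put the $h_k$ there. To repair this you would have to replace each $h_k$ by a constant multiple of a thin Blaschke product with the same interpolation data and comparable norm (the Dyakonov--Nicolau/Mortini refinement of Earl), and that again requires~$(3)$. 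The cleanest reorganization is therefore to prove $(2)\Rightarrow(3)$ directly (an easy norm argument: if $\delta_n\not\to 1$, pick data that forces $\|f\|_\infty$ to exceed the allowed bound) and then establish $(3)\Rightarrow(1)$ as the substantive implication, which is indeed where the $QA$ machinery of Wolff and Sundberg--Wolff does its real work.
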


Thus, thin sequences are interpolating sequences for a very small algebra and, therefore, they must have a strong separation property. One way to think of this separation property is in the maximal ideal space. We first describe the most natural partition of $M(H^\infty + C)$, namely the fibers. 

\begin{defin}
For $\lambda \in \mathbb{T}$, let $M_\lambda = \{\varphi \in M(H^\infty + C): \varphi(z) = \lambda\}$. The set $M_\lambda$ is called the fiber over $\lambda$. \end{defin}

It is easy to see that the identity function $f(z) = z$ is constant on each fiber. It follows that each continuous function is constant on each fiber as well. But the algebra $QC$ is strictly larger than $C$ and not all $QC$ functions are continuous on each fiber. For $QC$, we need to refine this partition.

\begin{defin} 
For each $\varphi \in M(H^\infty + C)$, define $E_\varphi = \{\psi \in M(H^\infty + C): \varphi(q) = \psi(q)~\mbox{for all}~q \in QC\}$. The set $E_\varphi$ is called the $QC$-level set corresponding to $\varphi$.
\end{defin}

Note that if $f \in QC$, then $\hat{f}$ is constant on a $QC$-level set. 

\begin{prop} A thin sequence can have at most one cluster point in a $QC$-level set. \end{prop}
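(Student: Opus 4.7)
The plan is to argue by contradiction using the Wolff--Sundberg characterization (Theorem~\ref{SW}) that says a thin sequence is an interpolating sequence for the algebra $QA \subset QC$. Suppose $\varphi_1 \ne \varphi_2$ are two distinct cluster points of a thin sequence $\{z_n\}$ in $M(H^\infty + C)$, both lying in a common $QC$-level set $E_\varphi$. The strategy is to construct a single $QC$ function whose Gelfand transform separates $\varphi_1$ from $\varphi_2$, which directly contradicts the definition of $E_\varphi$.

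First, I would use the fact that $M(H^\infty + C)$ is compact Hausdorff to produce disjoint open neighborhoods $U_1, U_2$ of $\varphi_1, \varphi_2$ respectively. Since each $\varphi_i$ is a cluster point of $\{z_n\}$, the index sets
\[
A = \{n : z_n \in U_1\}, \qquad B = \{n : z_n \in U_2\}
\]
are both infinite and disjoint, and $\varphi_i$ remains a cluster point of $\{z_n\}_{n \in A}$ (respectively $\{z_n\}_{n \in B}$) because any neighborhood $V \subset U_i$ of $\varphi_i$ contains infinitely many $z_n$, all of whom belong to $A$ (respectively $B$).

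Next, I would define $\lambda \in \ell^\infty$ by $\lambda_n = 0$ if $n \in A$, $\lambda_n = 1$ if $n \in B$, and $\lambda_n = 0$ otherwise. Since $\{z_n\}$ is thin, Theorem~\ref{SW}(1) provides a function $f \in QA$ with $f(z_n) = \lambda_n$ for all $n$. Now take a net in $A$ with $z_{n_\alpha} \to \varphi_1$ in the Gelfand topology and a net in $B$ with $z_{m_\beta} \to \varphi_2$; continuity of $\hat{f}$ on $M(H^\infty)$ gives
\[
\hat{f}(\varphi_1) = \lim_\alpha \hat{f}(z_{n_\alpha}) = 0, \qquad \hat{f}(\varphi_2) = \lim_\beta \hat{f}(z_{m_\beta}) = 1.
\]
But $f \in QA \subset QC$, and $\varphi_1, \varphi_2 \in E_\varphi$ means $\varphi_1(q) = \varphi_2(q)$ for every $q \in QC$; in particular $\hat{f}(\varphi_1) = \hat{f}(\varphi_2)$, forcing $0 = 1$, a contradiction.

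The argument is essentially one of bookkeeping once Theorem~\ref{SW} is in hand; the only mild subtlety is step two, making sure the separation of $\varphi_1$ from $\varphi_2$ by neighborhoods in $M(H^\infty + C)$ translates into disjoint index sets for which each $\varphi_i$ is still a cluster point. This is handled by defining $A$ and $B$ as the \emph{full} sets of indices entering $U_1$ and $U_2$, rather than extracting specific subnets first. The power of Theorem~\ref{SW} is what makes the rest transparent: thinness is exactly the condition that permits interpolation inside the tiny algebra $QA$, and $QC$-level sets are by definition insensitive to $QC$ functions, so the two properties collide as soon as we try to have two cluster points sitting together.
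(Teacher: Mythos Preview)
Your proof is correct and follows essentially the same route as the paper: separate the two putative cluster points by disjoint open sets in the Hausdorff space $M(H^\infty + C)$, use Theorem~\ref{SW} to interpolate a $0/1$ pattern along the resulting disjoint subsequences by a function in $QA \subset QC$, and observe that this contradicts constancy of $QC$ functions on a $QC$-level set. Your write-up is slightly more explicit about the index-set bookkeeping and the passage to nets, but the argument is the same.
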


\begin{proof} Suppose $\{\alpha_n\}$ is a thin sequence with two cluster points in $E_\varphi$. Then there are two distinct points, $\psi_1$ and $\psi_2$, in the closure of the sequence. But $M(H^\infty + C)$ is a Hausdorff space and therefore we can separate the two points by open sets $U_1$ and $U_2$ with disjoint closures and choose two disjoint subsets $\Lambda_1$ and $\Lambda_2$ of this sequence contained in $U_1$ and $U_2$, respectively. Now using Theorem~\ref{SW}, we obtain a function $f$ such that $f(\alpha_n) = 0$ if $\alpha_n \in \Lambda_1$ and $f(\alpha_n) = 1$ if $\alpha_n \in \Lambda_2$. In particular $\psi_1(f) = 0$ while $\psi_2(f) = 1$. But $f \in QC$ and therefore $f$ must be constant on the $QC$-level set. Since $\psi_1$ and $\psi_2$ belong to the same level set, this is impossible.
\end{proof}

The fact that the zeros of a thin Blaschke product that lie in $M(H^\infty + C)$ must lie in different $QC$-level sets is a very strong separation property. This paved the way for further study of the interpolation properties of thin sequences: Can we, as Shapiro and Shields did, transfer the study to the Hilbert space $H^2$? What about other $H^p$ spaces?

\section{Extending the definition of thin to $H^p$ spaces}

We have already hinted that thin sequences are the ones for which interpolation can be done with a very good bound on the norm. If we relax the interpolation condition a bit, we can study when functions do approximate interpolation with the best norm possible. To make this precise, we provide a definition that makes sense in a wider context -- for example, for general uniform algebras. (For more information, see \cite{GM}.)

\begin{defin} A sequence $\{\alpha_n\}$ is said to be an asymptotic interpolating sequence for $H^\infty$ if for every sequence $\{w_n\}$ in the ball of $\ell^\infty$, there is an $H^\infty$ function $f$ such that $\|f\|_\infty\le1$ and $|f(z_n) - w_n| \to 0$. \end{defin}

Following the work in \cite{GM}, Dyakonov and Nicolau showed that an interpolating sequence is thin if and only if
there is a sequence $\{m_j\}$, $0 < m_j < 1$ and $m_j \to 1$ such that every interpolation problem $F(z_j) = w_j$ with $|w_j| \le m_j$ has a solution $f \in H^\infty$ with $\|F\|_\infty \le 1$,  \cite{DN}. In fact, this happens if and only if there exists a sequence of positive numbers $\{\varepsilon_j\}$ such that every interpolation problem with $1 \ge |a_j| \ge \varepsilon_j$ for all $j$ has a nonvanishing solution $g \in H^\infty$. Thus, if the sequence $\{w_n\}$ grows slowly enough, we can do interpolation with the best norm possible. In fact, the solution can be chosen to be a thin Blaschke product, as noted in \cite{DN}. (For the details of the proof, see \cite{Mo}).

What are some other possible ways of defining thin sequences in the $H^p$ context?  We provide two possible alternative definitions below.

\begin{defin}
Let $1 \le p \le \infty$. A sequence $\{z_n\}$ is an eventual $1$-interpolating sequence  for $H^p$ $(EIS_p)$ if the following holds: For every $\varepsilon > 0$ there exists $N$ such that for each $\{a_n\} \in \ell^p$ there exists $f_{N, a} \in H^p$ with
$$f_{N, a}(z_n) (1 - |z_n|^2)^{1/p} = a_n ~\mbox{for}~ n \ge N ~\mbox{and}~ \|f_{N, a}\|_p \le (1 + \varepsilon) \|a_n\|_{N, \ell^p}.$$
\end{defin} 

\begin{defin} Let $1 \le p \le \infty$. A sequence $\{z_j\}$ is a strong $AIS_p$-sequence if for all $\varepsilon > 0$ there exists $N$ such that for all sequences $\{a_j\} \in \ell^p$ there exists a function $G_{N, a} \in H^p$ such that $\|G_{N, a}\|_p \le \|a\|_{N,\ell^p}$ and 
$$\|G_{N, a}(z_j) (1 - |z_j|^2)^{1/p} - a_j\|_{N, \ell^p} < \varepsilon \|a_j\|_{N, \ell^p}.$$ \end{defin}

It turns out that both of these ``new'' definitions are equivalent to a sequence being thin, see \cite{GPW}.

\section{Maximal ideal space and operator theory}

For $h \in L^\infty$ define the Toeplitz operator on $H^2$ by 
$T_h f = Ph f$, where $P$ is the orthogonal projection from  $L^2~\mbox{to}~ H^2.$
The Hankel operator is $H_h f = (I - P) h f,\,  f \in H^2.$ In 1963, Brown and Halmos \cite{BH} showed that if $f, g \in L^\infty$, then $T_f T_g = T_{fg}$ if and only if $\overline{f} \in H^\infty$ or $g \in H^\infty$. 
A natural question is the following: For which symbols $f, g$ is $T_f T_g$ a compact perturbation of a Toeplitz operator? In \cite{ACS}, Axler, Chang and Sarason showed that if $H^\infty[f] \cap H^\infty[g] \subset H^\infty+C,$ then $H_f^\star H_g$ is compact. Though they proved necessity for a large class of functions, the theorem was completed in 1982 by A. Volberg \cite{V}. These proofs relied on the maximal ideal space structure. There is a reason for this and it goes back to something we can see directly from the statement of the Chang-Marshall theorem.

\begin{cor}[Corollary to the Chang-Marshall Theorem] 
Let $A$ and $B$ be Douglas algebras. Then $M(A) \subseteq M(B)$ if and only if $B \subseteq A$.
\end{cor}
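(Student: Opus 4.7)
The plan is to combine the representing-measure description of $M(A)$ recalled just before the statement with the Chang-Marshall theorem. Under that identification, a functional $\varphi\in M(H^\infty)$ belongs to $M(D)$ for a Douglas algebra $D$ precisely when its unique representing measure $\mu_\varphi$ on $M(L^\infty)$ is multiplicative on $D$, so the two inclusions $M(A)\subseteq M(B)$ and $B\subseteq A$ should translate cleanly into one another.

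For the implication $B\subseteq A\Rightarrow M(A)\subseteq M(B)$, I would fix $\varphi\in M(A)$, use the characterization above to see that $\mu_\varphi$ is multiplicative on $A$, and then observe that multiplicativity on the larger algebra $A$ is automatically inherited by the subalgebra $B$; hence $\varphi\in M(B)$. This direction is pure bookkeeping.

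The reverse direction $M(A)\subseteq M(B)\Rightarrow B\subseteq A$ is where the real work lies, and my plan is to reduce it to a statement about individual inner functions using Chang-Marshall. That theorem writes $B$ as the closed subalgebra of $L^\infty$ generated by $H^\infty$ together with the complex conjugates $\bar u$ of interpolating Blaschke products $u$ invertible in $B$. Since $H^\infty\subseteq A$ and $A$ is closed in $L^\infty$, it then suffices to show $\bar u\in A$ for each such generator $u$. For this I would appeal to the Gelfand-theoretic fact that, in a commutative Banach algebra $D$, an element is invertible exactly when its Gelfand transform is nonvanishing on $M(D)$: invertibility of $u$ in $B$ gives $|\hat u|\ge\delta>0$ on the compact space $M(B)$, whence $|\hat u|\ge\delta$ on $M(A)$ by hypothesis, so $u$ is invertible in $A$; since $u$ is inner, $u^{-1}=\bar u$ a.e.\ on $\mathbb T$, and hence $\bar u\in A$.

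The forward direction is essentially trivial once one has the representing-measure language. The substantive direction is the reverse one, and essentially all of its difficulty is absorbed into the Chang-Marshall theorem itself, which supplies the otherwise-inaccessible set of generators of $B$. Granted Chang-Marshall, the remaining ingredient is only the standard Gelfand criterion for invertibility, so the corollary follows in a few lines.
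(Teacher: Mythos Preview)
Your proposal is correct and matches the paper's approach on the substantive direction $M(A)\subseteq M(B)\Rightarrow B\subseteq A$: both of you invoke Chang--Marshall to reduce to showing $\bar u\in A$ for each interpolating Blaschke product $u$ invertible in $B$, and both conclude via the Gelfand invertibility criterion. For the converse $B\subseteq A\Rightarrow M(A)\subseteq M(B)$, your argument is in fact a bit more economical than the paper's: you simply observe that multiplicativity of $\mu_\varphi$ on $A$ restricts to the subalgebra $B$, whereas the paper re-derives multiplicativity on $B$ by showing each Blaschke product $b$ invertible in $B$ satisfies $|\varphi(b)|=1$ and is therefore constant on $\operatorname{supp}\mu_\varphi$, so that the Chang--Marshall generators of $B$ behave like $H^\infty$ functions under $\varphi$.
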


\begin{proof} Suppose $M(A) \subseteq M(B)$. Let $b$ be an interpolating Blaschke product invertible in $B$. Then $b$ cannot be in a maximal ideal of $B$. Therefore, since maximal ideals are precisely the kernels of the nonzero multiplicative linear functionals on $B$, we see that $b$ cannot vanish at any point of $M(B)$ -- and therefore the same is true on $M(A)$. Now $b \in H^\infty \subseteq A$ and since $b$ does not vanish on $M(A)$, $b$ is invertible in $A$. Thus $\overline{b} \in A$. Now we use the Chang-Marshall theorem to conclude that since $B$ is generated by $H^\infty$ and the conjugates of the interpolating Blaschke products invertible in $B$ -- all of which are invertible in $A$ as well, we have $B \subseteq A$.

For the other direction, suppose $B \subseteq A$. Let $\varphi \in M(A)$. Then for every Blaschke product $b$ invertible in $B$, we see that $b$ is also invertible in $A$. Therefore, $1 = \varphi(b \overline{b}) = \varphi(b) \varphi(\overline{b}) = |\varphi(b)|^2$. Thus, $|\varphi(b)| = 1$ and since $\varphi(b)$ is given by integration against a positive measure $\mu$ supported on a subset of the maximal ideal space, we see that $b$ must be constant on the support of $\varphi$. Thus, if $f, g \in B$, we know that $f$ and $g$ are limits of functions of the form $\sum_j h_j \overline{b_j}$ with $b_j$ Blaschke products invertible in $B$. By our argument above, the conjugates of the Blaschke products are all constant on the support of $\varphi$, and therefore -- as far as $\varphi$ is concerned -- they act like $H^\infty$ functions; that is,
$$\varphi(f g) = \int_{\mbox{supp} \, \varphi } fg d\mu_\varphi = \varphi(f) \varphi(g).$$ Thus, $\varphi$ is (or can be identified with) a nonzero multiplicative linear functional on $B$.\end{proof}

So let us return to what Axler, Chang, and Sarason and, later, Volberg wanted to do. They each wanted to show something about the algebra $H^\infty[f] \cap H^\infty[g]$. Since $H^\infty[f]$ and $H^\infty[g]$ are each Douglas algebras and the intersection is again a Douglas algebra, we expect the Chang-Marshall theorem to come into play here; that is, we expect a proof that relies on the techniques that were developing at the time. And that is precisely what happened -- their results depended on a distribution function inequality as well as maximal ideal space techniques and Volberg's proof used some of these same techniques.

\section{Asymptotically orthonormal sequences}

Volberg's paper not only answered the question of whether the converse of the Axler, Chang, Sarason result was valid, it also looked at so-called {\it asymptotically orthonormal sequences} and their connection to thin sequences and properties of the associated Gram matrix. We first recall some definitions.

\bigskip

Let $\{x_n\}$ be a sequence in a complex Hilbert space $\mathcal{H}$. 

\begin{defin}
The sequence $\{x_n\}$ is said to be a Riesz sequence if there are positive constants $c$ and $C$ for which 
$$c \sum_{n \ge 1} |a_n|^2 \le \left\|\sum_{n \ge 1} a_n x_n\right\|^2_{\mathcal{H}} \le C \sum_{n \ge 1} |a_n|^2$$ for all sequences $\{a_n\} \in \ell^2.$
\end{defin}

We are interested in the following setting: Let $K_z(w) = \frac{1}{1 - \overline{z} w}$ denote the reproducing kernel for $H^2$ for $z \in \mathbb{D}$, $k_z$ the normalized reproducing kernel, and given a sequence of points $\{z_j\}$, recall that $G$ denotes the Gram matrix with entries $k_{ij} = \langle k_{z_i}, k_{z_j} \rangle$. Riesz sequences correspond to the ones for which the associated Gram matrix is invertible. We are now ready to introduce our asymptotically orthonormal sequences.

\begin{defin}
A sequence $\{x_n\}$ is an asymptotically orthonormal sequence (AOS) in a Hilbert space $\mathcal{H}$ if there exists an integer $N_0$ such that for all $N \ge N_0$ there are constants $c_N$ and $C_N$ such that
$$c_N \sum_{n \ge N} |a_n|^2 \le \left\|\sum_{n \ge N} a_n x_n \right\|^2_{\mathcal{H}} \le C_N \sum_{n \ge N} |a_n|^2,$$ where $\displaystyle\lim_{N \to \infty} c_N = \lim_{N \to \infty} C_N = 1$.
\end{defin}
If we can take $N_0 = 1$, the sequence is an asymptotically orthonormal basic sequence, or AOB.

Volberg showed (see also \cite{CFT}) that the following is true.

\begin{thm}[Volberg, Theorem 2 in \cite{V}] 
\label{Volberg} 
The following are equivalent:
\begin{enumerate}
\item$\{z_n\}$ is a thin interpolating sequence;
\item
The sequence $\{k_{z_n}\}$ is a complete $AOB$ for its span.
\item
There exist a separable Hilbert space $\mathcal{K}$, an orthonormal basis $\{e_n\}$ for $\mathcal{K}$ and $U, K: \mathcal{K} \to K_B$, $U$ unitary, $K$ compact, $U + K$ invertible, such that 
$$(U + K)(e_n) = k_{z_n} \text{ for all } n \in \N.$$
\item The Gram matrix associated to the sequence defines a bounded invertible operator of the form $I + K$ with $K$ compact.
\end{enumerate}
 \end{thm}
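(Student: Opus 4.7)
The plan is to split the four equivalences into the operator-theoretic block $(2)\Leftrightarrow(3)\Leftrightarrow(4)$, which reduces to standard Hilbert-space manipulations of the Gram matrix $G=T^*T$ (where $T\colon \ell^2\to K_B$ is the map $Te_n=k_{z_n}$), and the analytic equivalence $(1)\Leftrightarrow(4)$, which carries the real content of the theorem.

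I would first dispose of $(2)\Leftrightarrow(4)$ by observing that the AOB condition with $c_N,C_N\to 1$ is literally $\|P_{\ge N}(G-I)P_{\ge N}\|\to 0$, and that for a bounded self-adjoint $S$ on $\ell^2$ this last condition is equivalent to compactness of $S$: indeed, writing $S=PSP+PS(I-P)+(I-P)SP+(I-P)S(I-P)$ for a finite-rank projection $P$, the first three terms are finite-rank while the last is made small by choosing $P=P_{<N}$ for $N$ large. For $(3)\Leftrightarrow(4)$ I would use the polar decomposition $T=V|T|$ and the continuous functional calculus applied to $f(x)=(1+x)^{1/2}-1$: if $G=I+K$ with $K$ compact, then $|T|=I+K'$ with $K'$ compact, while $V$ is unitary from $\ell^2$ onto $K_B$ (trivial kernel because $G$ is invertible, dense range because $\{k_{z_n}\}$ spans $K_B$ for an interpolating sequence). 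Conversely $(U+K)^*(U+K)=I+(\text{compact})$ gives (4).

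For $(4)\Rightarrow(1)$ I would read the compactness of $K=G-I$ back as $\|Ke_n\|\to 0$ (since $e_n\rightharpoonup 0$) and then compute
\[
\|Ke_n\|^2=\sum_{m\ne n}|\langle k_{z_m},k_{z_n}\rangle|^2=\sum_{m\ne n}(1-\rho(z_n,z_m)^2)\longrightarrow 0,
\]
using the identity $|\langle k_{z_n},k_{z_m}\rangle|^2=1-\rho(z_n,z_m)^2$. Since $1-\rho\le 1-\rho^2$ and (eventually) $-\log\rho\le 2(1-\rho)$, this forces $\prod_{m\ne n}\rho(z_n,z_m)\to 1$, i.e., thinness; the invertibility of $G$ gives the Riesz-basis property, so $\{z_n\}$ is interpolating.

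The substantive direction is $(1)\Rightarrow(4)$. My approach would factor $T$ through the Malmquist--Walsh orthonormal basis $\{e_n^{MW}=B_{<n}k_{z_n}\}$ of the tail model space $K_{B_N}$, where $B_{<n}$ denotes the partial Blaschke product on $z_N,\ldots,z_{n-1}$: with $V\colon\ell^2(\ge N)\to K_{B_N}$ the unitary $Ve_n=e_n^{MW}$, the matrix of $V^*T_N$ is upper triangular with diagonal entries $\overline{B_{<n}(z_n)}$ and off-diagonal entries bounded in modulus by $|B_{<n}(z_m)|\sqrt{1-\rho(z_n,z_m)^2}$. Thinness gives $|B_{<n}(z_n)|\to 1$, so the diagonal is a unitary $U_0$ (carrying the phases $\overline{B_{<n}(z_n)}/|B_{<n}(z_n)|$) plus a compact diagonal perturbation. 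If the strictly upper-triangular part $U$ can be shown compact, then $T_N=V(U_0+\text{compact})$ realizes condition (3), hence (4). The hard part will be proving compactness of $U$ from the thin condition alone: thinness forces $\sum_{m\ne n}(1-\rho(z_n,z_m)^2)\to 0$, so both row and column sums of $|U_{nm}|^2$ tend to $0$ individually, but the total double sum may diverge, so $U$ is not in general Hilbert--Schmidt. One must either exploit the extra decay from the Blaschke factors $|B_{<n}(z_m)|$ through a weighted Schur test, or appeal to the commutant-lifting argument behind Theorem~\ref{thm:replacement} to estimate $\|V^*T_N-U_0\|$ sharply, as is done in \cite{CFT}.
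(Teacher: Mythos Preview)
The paper does not give its own proof of this theorem; it simply records that Volberg's original argument ``used the main lemma from \cite{ACS} as well as Hoffman's theory,'' i.e., the distribution-function inequality of Axler--Chang--Sarason together with the Gleason-part machinery described in Section~\ref{Maximal Ideal Space}. Your proposal takes a genuinely different route: the operator-theoretic block $(2)\Leftrightarrow(3)\Leftrightarrow(4)$ and the implication $(4)\Rightarrow(1)$ are handled by clean, self-contained Hilbert-space arguments (compactness of $G-I$ read off from tails, polar decomposition, and the kernel identity $|\langle k_{z_n},k_{z_m}\rangle|^2=1-\rho(z_n,z_m)^2$), while for $(1)\Rightarrow(4)$ you opt for the Malmquist--Walsh triangularization, which is the line taken in \cite{CFT} rather than in \cite{V}. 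The payoff of your approach is that it avoids the function-algebra apparatus entirely and makes the link between thinness and the Gram matrix completely explicit; the cost is the step you flag yourself.

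That flagged step is a real gap as written. Knowing that each row and column of the strict upper-triangular part $U$ has small $\ell^2$ norm is not by itself enough to force $\|U\|\to 0$ or even compactness, and you correctly note that $U$ need not be Hilbert--Schmidt. The two escape routes you name are the right ones: either use the extra smallness coming from the Blaschke factors $B_{<n}(z_m)$ in a Schur-type estimate (this is how \cite{CFT} closes the argument, via the telescoping identity for the partial products and the thin condition $\delta_n\to 1$), or bypass the matrix analysis altogether by invoking the near-isometric interpolation of Theorem~\ref{thm:replacement} to show directly that $\|P_{\ge N}(G-I)P_{\ge N}\|\to 0$. Either completion is shorter and more elementary than Volberg's original route through \cite{ACS} and Hoffman's theory, but until one of them is carried out your $(1)\Rightarrow(4)$ remains a plan rather than a proof.
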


The proof used  the main lemma from \cite{ACS} as well as  Hoffman's theory. Volberg also showed that $G - I \in S_2$ where $S_2$ denotes the Hilbert-Schmidt operators if and only if $\prod_j \delta_j$ converges. Thus, $G - I$ is in the Schatten class $S_2$ if and only if $\sum_j (1 - \delta_j) < \infty$. What about $2 < p < \infty$? 

Using Earl's theorem and results that are essentially in Shapiro and Shields (see also \cite{AM})  J. E. McCarthy, S. Pott, and the authors \cite{GMPW} showed the following:

\begin{thm}
Let $2 \le p < \infty$. Then $G - I \in S_p$ if and only if $\sum_n(1 - \delta_n^2)^{p/2} < \infty$.
\end{thm}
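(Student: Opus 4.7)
The plan combines an entry-level computation with Schur--Horn majorization for necessity and an Earl/AOB factorization for sufficiency.

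First I would record the entry structure of $G-I$: with the normalized reproducing kernel $k_z(w)=(1-\abs{z}^2)^{1/2}/(1-\overline{z}w)$, $|\langle k_{z_i},k_{z_j}\rangle|^2 = 1-\rho(z_i,z_j)^2$. Thus $G-I$ has zero diagonal, and its $n$-th column has squared $\ell^2$-norm $\sum_{m\ne n}(1-\rho(z_m,z_n)^2)$. A preliminary comparison,
\[
 1-\delta_n^2 \asymp \sum_{m\ne n}(1-\rho(z_m,z_n)^2),
\]
holds once $\delta_n$ is close to $1$: compare $-\log\delta_n^2 = -\sum_{m\ne n}\log\rho(z_m,z_n)^2$ with the sum using $-\log(1-x)\asymp x$ for small $x$, noting that thinness forces every factor $\rho(z_m,z_n)\to 1$ uniformly in $m$ as $n\to\infty$. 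So the squared $n$-th column $\ell^2$-norm of $G-I$ is comparable to $1-\delta_n^2$ on a thin tail.

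For necessity, I would apply Schur--Horn majorization to the positive compact self-adjoint operator $(G-I)^2$. Its diagonal entries are $\|(G-I)e_n\|_{\ell^2}^2$ and its eigenvalues are $\sigma_k(G-I)^2$; since $x\mapsto x^{p/2}$ is convex on $[0,\infty)$ for $p\ge 2$, Hardy--Littlewood--P\'olya gives
\[
\sum_n \|(G-I)e_n\|_{\ell^2}^p \le \sum_k \sigma_k(G-I)^p = \|G-I\|_{S_p}^p,
\]
and combined with the column-norm comparison this yields $\sum_n(1-\delta_n^2)^{p/2}\lesssim \|G-I\|_{S_p}^p$, giving the necessary direction.

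For sufficiency, the plan is to use Earl's estimate~\eqref{Earl} in combination with the AOB structure from Theorem~\ref{Volberg}. Volberg gives $G = I + K$ with $K$ compact, so it suffices to show $K \in S_p$ under the hypothesis. Earl's theorem delivers, for each sufficiently large $n$, a function $F_n\in H^\infty$ with $F_n(z_m)=\delta_{mn}$ and $\|F_n\|_\infty \le 1+c(1-\delta_n^2)^{1/2}$; the corresponding dual Riesz kernels $\tilde k_{z_n}$ satisfy $\tilde k_{z_n} = k_{z_n} + \eta_n$ with $\|\eta_n\|_{H^2}\lesssim (1-\delta_n^2)^{1/2}$. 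Biorthogonality unfolds into $(G-I)_{mn} = -\langle k_{z_m},\eta_n\rangle$, and one aims to estimate $\|G-I\|_{S_p}$ through a factorization $G-I = WD$ with $D=\operatorname{diag}((1-\delta_n^2)^{1/2})$ and $W$ bounded on $\ell^2$. Since $S_p$ is an ideal and $D\in S_p$ by hypothesis, boundedness of $W$ would force $G-I\in S_p$.

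The main obstacle is the boundedness of $W$: its columns have $\ell^2$-norm $\asymp 1$ by the Earl/Shapiro--Shields construction, but operator boundedness amounts to the normalized Gram matrix $[\langle \eta_m,\eta_n\rangle]/((1-\delta_m^2)(1-\delta_n^2))^{1/2}$ defining a bounded operator on $\ell^2$. I would handle this by reapplying the AOB criterion of Theorem~\ref{Volberg} to the normalized error vectors $\eta_n/\|\eta_n\|$, whose pairwise inner products involve the same pseudohyperbolic quantities already controlled by the thinness assumption; so the associated Gram operator is a compact perturbation of the identity and hence bounded. The strong separation of thin sequences noted in Section~4 (Proposition~\ref{Hoffman} and the $QC$-level-set discussion) is what makes this perturbation step go through.
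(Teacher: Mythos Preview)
Your necessity argument is sound: Schur--Horn majorization applied to the positive compact operator $(G-I)^2$, together with the convexity of $t\mapsto t^{p/2}$ for $p\ge 2$, gives $\sum_n\|(G-I)e_n\|^p\le\|G-I\|_{S_p}^p$, and the column-norm comparison $\|(G-I)e_n\|^2\asymp 1-\delta_n^2$ is correct on a thin tail (thinness being available since $G-I\in S_p$ forces compactness, hence thinness by Theorem~\ref{Volberg}). This direction is clean and arguably more transparent than what the paper's reference \cite{GMPW} does.

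The sufficiency argument, however, has a genuine gap precisely at the point you flag as ``the main obstacle.'' You correctly set up the factorization $G-I=WD$ with $D=\operatorname{diag}((1-\delta_n^2)^{1/2})\in S_p$, and you correctly compute that $\eta_n=\tilde k_{z_n}-k_{z_n}$ has $\|\eta_n\|_{H^2}\asymp(1-\delta_n^2)^{1/2}$ (indeed $\|\eta_n\|^2=(1-\delta_n^2)/\delta_n^2$ exactly, from $\|\tilde k_{z_n}\|=1/\delta_n$; Earl is not even needed here). But boundedness of $W$ is equivalent to $\{\eta_n/\|\eta_n\|\}$ being a Bessel sequence, and your proposed justification---``reapply Volberg's AOB criterion''---does not work. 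Theorem~\ref{Volberg} characterizes when \emph{normalized reproducing kernels} $k_{z_n}$ form an AOB; it says nothing about the vectors $\eta_n=(B_n/B_n(z_n)-1)k_{z_n}$, which are not kernels for any sequence. The Gram matrix of $\{\eta_n\}$ is $G+G^{-1}-2I=G^{-1}(G-I)^2$, so boundedness of the normalized Gram is essentially the boundedness of $D^{-1}(G-I)^2D^{-1}$---which is what you are trying to prove. The appeal to ``the same pseudohyperbolic quantities'' and to Proposition~\ref{Hoffman} is circular and does not close this loop.

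The paper itself does not give a proof here, only citing \cite{GMPW} and naming Earl's theorem together with Shapiro--Shields-type results as the ingredients. So a detailed comparison is not possible from this text, but the gap in your sufficiency direction is real: you need an independent reason---not a restatement of the goal---for the Bessel bound on the normalized error vectors, and that is exactly where the work in \cite{GMPW} lies.
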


This theorem extends Volberg's result to the cases between $2$ and infinity and simplifies the proof for the case $p = \infty$.

\section{Carleson measures and thin sequences}

It is possible to characterize thin sequences in terms of a certain vanishing Carleson measure condition.  This Carleson measure condition has strong connections to the notions of eventual interpolating sequences and the property of strong $AIS_p$.

%Carleson potentials and the reproducing kernel thesis for embedding theorems, Petermichl, Treil, Wick, 2007. \color{red} p. 16 of GPW \color{black}

 For $z \in \mathbb{D}$, we let $I_z$ denote the interval in $\T$ with center $\frac{z}{|z|}$ and length $1 - |z|$. For an interval $I$ 
in $\T$, we let $$S_I = \left\{z \in \mathbb{D}: \frac{z}{|z|} \in I~\mbox{and}~ |z| \ge 1 - |I|\right\}.$$ For $A > 0$, the interval $AI$ denotes an interval with the same center as $I$ and length $A|I|$.
Given a positive measure $\mu$ on $\D$, let us denote the (possibly infinite) constant
$$
    \CC(\mu) =  \sup_{f \in H^2, f \neq 0} \frac{\|f\|^2_{L^2(\D, \mu)}}{\|f\|^2_2}
$$
as the Carleson embedding constant of $\mu$ on $H^2$ and
$$
    \RR(\mu) =  \sup_{z\in\mathbb{D}} \frac{\|K_z\|_{L^2(\D, \mu)}}{\|K_z\|_2}
$$
as the embedding constant of $\mu$ on the reproducing kernel of $H^2$.  We use $K_z$ for the non-normalized kernel later, and $k_z$ for the normalized kernel.

The Carleson Embedding Theorem asserts that the constants are equivalent. In particular, there exists a constant $c$ such that
$$
   \RR(\mu) \le \CC(\mu) \le c \RR(\mu),
$$
with best known constant $c= 2e$, \cite{PTW}.

We recall the following result  from \cite{SW}; for a generalized version, see \cite{CFT}.   This result provides a direction connection between thin sequences and a certain measure being a vanishing Carleson measure.
\begin{thm}[See Sundberg, Wolff, Lemma 7.1 in \cite{SW} or Chalendar, Fricain, Timotin, Proposition 4.2 in \cite{CFT}]
\label{Cmeasure} Suppose $Z = \{z_n\}$ is a sequence of distinct points. Then the following are equivalent:

\begin{enumerate} 
\item $Z$ is a thin interpolating sequence;
\item for any $A \ge 1$, $$\lim_{n \to \infty} \frac{1}{|I_{z_n}|} \sum_{k \ne n, z_k \in S(A I_n)} (1 - |z_k|) = 0.$$
\end{enumerate}
\end{thm}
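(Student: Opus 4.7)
My plan is to set $s_n := \sum_{m \neq n}\bigl(1 - \rho(z_m,z_n)^2\bigr)$ and first observe that thinness, $\delta_n = \prod_{m\neq n}\rho(z_m,z_n) \to 1$, is equivalent to $s_n \to 0$. This is an elementary consequence of the inequalities $-\log x \geq 1-x \geq \tfrac{1}{2}(1-x^2)$ on $(0,1]$. The bridge to (2) will then be the standard identity
\[
1-\rho(z_m,z_n)^2 = \frac{(1-|z_m|^2)(1-|z_n|^2)}{|1-\overline{z_n}z_m|^2},
\]
which converts the product quantity $\delta_n$ into a Carleson-type sum.

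For (1) $\Rightarrow$ (2), I would fix $A \geq 1$ and use the geometric bound $|1-\overline{z_n}z_k| \leq 3A(1-|z_n|)$ valid whenever $z_k \in S(AI_n)$. Inserting this into the identity gives $1-\rho(z_k,z_n)^2 \geq (1-|z_k|)/(9A^2 |I_n|)$, and summing over $z_k \in S(AI_n)\setminus\{z_n\}$ yields
\[
\frac{1}{|I_n|}\sum_{\substack{k \neq n \\ z_k \in S(AI_n)}}(1-|z_k|) \;\leq\; 9A^2\, s_n,
\]
which tends to $0$ by thinness.

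For (2) $\Rightarrow$ (1), my plan is a Whitney decomposition of $\mathbb{D}$ around $z_n$: set $T_0(n) = S(I_n)$ and $T_j(n) = S(2^j I_n) \setminus S(2^{j-1}I_n)$ for $j \geq 1$. On $T_j(n)$ with $j \geq 1$ the estimate $|1-\overline{z_n}z_m| \gtrsim 2^{j-1}|I_n|$ gives
\[
\sigma_j(n) := \sum_{\substack{m \neq n \\ z_m \in T_j(n)}}(1-\rho(z_m,z_n)^2) \;\lesssim\; \frac{1}{4^j |I_n|}\sum_{\substack{m \neq n \\ z_m \in S(2^j I_n)}}(1-|z_m|).
\]
I would first upgrade (2) to a uniform Carleson bound $\mu(S(AI_n)) \lesssim A|I_n|$ for the measure $\mu = \sum_k(1-|z_k|)\delta_{z_k}$, via a covering argument: any Carleson box meeting the sequence sits in $S(AI_{n_*})$ for a suitable reference point $z_{n_*}$, and the finitely many small-index exceptional points contribute only bounded total mass. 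With Carleson in hand, the tail $\sum_{j \geq J}\sigma_j(n) \lesssim \sum_{j \geq J} 2^{-j}$ is handled uniformly in $n$ by choosing $J$ large, while the finitely many head terms are killed by invoking (2) with $A = 2^j$ for each $j \leq J$ and taking $n$ large. This forces $s_n \to 0$, hence thinness; interpolating-ness follows automatically since $\delta_n \to 1$ and the points are distinct.

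The hardest step will be the Carleson upgrade. Condition (2) is stated only at sequence points and with the diagonal removed, so promoting it to a uniform Carleson bound on all Carleson boxes takes nontrivial geometric bookkeeping. Once that is secured, the Whitney annulus estimates and the head--tail split are routine.
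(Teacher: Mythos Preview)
The paper does not supply its own proof of this statement: it is quoted as a known result from \cite{SW} (Lemma~7.1) and \cite{CFT} (Proposition~4.2), and then used as input for the subsequent Theorem~\ref{thm:Carleson}. So there is no in-paper argument to compare against. Your plan is essentially the standard route taken in those references: the identity $1-\rho(z_m,z_n)^2 = (1-|z_m|^2)(1-|z_n|^2)/|1-\overline{z_n}z_m|^2$ is exactly the bridge between $\delta_n$ and the Carleson-box sums, and the Whitney annulus decomposition with a head--tail split is the right mechanism for $(2)\Rightarrow(1)$. The direction $(1)\Rightarrow(2)$ is clean as you wrote it.

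One point deserves sharpening. Your sketch of the Carleson upgrade (``any Carleson box meeting the sequence sits in $S(AI_{n_*})$ for a suitable reference point'') is not quite right as stated: an arbitrary box $S(I)$ need not be contained in $S(AI_{n_*})$ with a \emph{fixed} $A$, because the sequence points inside $S(I)$ may all satisfy $1-|z_k|\ll |I|$. What does work is a stopping-time argument on dyadic subintervals of $I$: call a dyadic $J\subset I$ \emph{good} if the top half of $S(J)$ contains some $z_k$ with $k\ge N$. For such $J$ one has $|J|/2<|I_{z_k}|\le|J|$ and hence $S(J)\subset S(5I_{z_k})$, so condition~(2) with $A=5$ gives $\mu'(S(J))\le 2|J|$ once $N$ is large enough. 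Decomposing $S(I)$ into the maximal good dyadic sub-boxes (which are pairwise disjoint and cover all sequence points in $S(I)$ with index $\ge N$) then yields $\mu'(S(I))\le 2|I|$. The finitely many points with index $<N$ lie in a fixed compact subset of $\mathbb{D}$ and contribute a Carleson measure trivially. With this correction your tail estimate $\sum_{j\ge J}\sigma_j(n)\lesssim 2^{-J}$ goes through uniformly in $n$, and the rest of the argument is as you described.
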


Using this result it is possible to prove the following.  

\begin{thm}[\cite{GPW}]
\label{thm:Carleson} Suppose $Z = \{z_n\}$ is a sequence. For $N > 0$, let  $$\mu_N = \sum_{k \ge N} (1 - |z_k|^2)\delta_{z_k}.$$        
Then the following are equivalent:
\begin{enumerate} 
\item $Z$ is a thin sequence;
\item $ \CC(\mu_N) \to 1$ as $N \to \infty$;
\item $ \RR(\mu_N) \to 1$ as $N \to \infty$.
\end{enumerate}
\end{thm}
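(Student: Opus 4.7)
My plan is to cycle $(1) \Rightarrow (2) \Rightarrow (3) \Rightarrow (1)$, with Volberg's Gram-matrix characterization of thin sequences (Theorem~\ref{Volberg}) as the engine. The crucial initial observation identifies $\CC(\mu_N)$ as the norm of a tail Gram matrix: introducing $\TT_N : H^2 \to \ell^2(\{n \ge N\})$ by $\TT_N f = \{f(z_n)(1-|z_n|^2)^{1/2}\}_{n \ge N} = \{\langle f, k_{z_n}\rangle\}_{n \ge N}$ one has $\|\TT_N f\|^2 = \|f\|^2_{L^2(\D, \mu_N)}$, so $\CC(\mu_N) = \|\TT_N\|^2 = \|\TT_N \TT_N^*\| = \|G_N\|$, where $G_N = (\langle k_{z_m}, k_{z_n}\rangle)_{m, n \ge N}$ is the tail Gram matrix. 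Because every diagonal entry of $G_N$ equals $1$, we automatically have $\CC(\mu_N) \ge 1$.

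For $(1) \Rightarrow (2)$, Theorem~\ref{Volberg} asserts that thin sequences are exactly those for which the full Gram matrix $G = I + K$ with $K$ compact on $\ell^2$. Letting $P_N$ denote the orthogonal projection onto $\ell^2(\{n \ge N\})$, since $P_N \to 0$ strongly and compact operators send weakly null sequences to norm null sequences, $\|K P_N\| \to 0$, so $\|G_N - I\| = \|P_N K P_N\| \to 0$ (here $I$ is the identity on $\ell^2(\{n \ge N\})$). Combined with $\|G_N\| \ge 1$, this yields $\CC(\mu_N) \to 1$. The step $(2) \Rightarrow (3)$ is the immediate chain $1 \le \RR(\mu_N) \le \CC(\mu_N)$, where the upper bound is the Carleson embedding theorem and the lower bound comes from testing $\RR$ at $z = z_n$ for $n \ge N$.

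For $(3) \Rightarrow (1)$, test $\RR(\mu_N)$ on $K_{z_n}$ with $n \ge N$ and use the identity $\frac{(1-|z_n|^2)(1-|z_k|^2)}{|1-\overline{z_n}z_k|^2} = 1 - \rho(z_n, z_k)^2$ to get
\[
\sup_{n \ge N}\sum_{\substack{k \ge N \\ k \ne n}}\left(1 - \rho(z_n, z_k)^2\right) \le \RR(\mu_N)^2 - 1 \longrightarrow 0.
\]
This forces $|z_n| \to 1$ and eventual distinctness, since a pair $(z_n, z_m)$ from the tail lying in a pseudohyperbolic compact set would leave an isolated term bounded away from $0$. Consequently, for each fixed $N_0$ the truncated sum $\sum_{k < N_0}(1 - \rho(z_n, z_k)^2) \to 0$ as $n \to \infty$. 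Letting $n \to \infty$ and then $N_0 \to \infty$, and invoking the elementary inequality $1 - \prod_k(1-a_k) \le \sum_k a_k$ valid for $a_k \in [0,1]$, we conclude $\prod_{k \ne n}\rho(z_n, z_k) \to 1$, i.e., $\{z_n\}$ is thin.

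The main obstacle is the step $(1) \Rightarrow (2)$, whose substantive content is Theorem~\ref{Volberg}: the assertion that thinness promotes $G - I$ from merely small to compact. Once that is granted, transferring from $\|G_N - I\| \to 0$ to $\CC(\mu_N) \to 1$ is automatic via the identification $\CC(\mu_N) = \|G_N\|$, and the kernel test underlying $(3) \Rightarrow (1)$ is sharp because $K_{z_n}$ is essentially extremal for $\RR$.
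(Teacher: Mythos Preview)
Your proof is correct and follows essentially the same route as the paper's: $(1)\Rightarrow(2)$ via Volberg's theorem (you invoke the Gram-matrix formulation, the paper cites the AOB formulation, but these are the same Theorem~\ref{Volberg}), $(2)\Rightarrow(3)$ trivially, and $(3)\Rightarrow(1)$ by testing on reproducing kernels and applying the Weierstrass inequality $1-\prod_k(1-a_k)\le\sum_k a_k$. Your clean identification $\CC(\mu_N)=\|\TT_N\|^2=\|G_N\|$ makes the link to Volberg's compactness criterion explicit, which is exactly the mechanism the paper has in mind.
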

The proof of $(1)\Rightarrow (2)$ uses Volberg's characterization of thin sequences as those that are asymptotic orthonormal bases \cite{V}, while $(3)\Rightarrow (1)$ is a computation with the Weierstrass inequality.  And, of course $(2)\Rightarrow (3)$ is immediate.

With this characterization of thin sequences it is possible to provide the following list of equivalent conditions for a sequence to be thin. 

\begin{thm}[\cite{GPW}]
\label{main} Let $\{z_n\}$ be a Blaschke sequence of distinct points in $\mathbb{D}$. The following are equivalent:
\begin{enumerate} 
\item $\{z_n\}$ is an $EIS_p$ sequence for some $p$ with $1 \le p \le \infty$;
\item $\{z_n\}$ is thin;
\item $\{k_{z_n}\}$ is a complete AOB in $K_B$;
\item $\{z_n\}$ is a strong-$AIS_p$ sequence for some $p$ with $1 \le p \le \infty$;
\item The measure $$\mu_N = \sum_{k \ge N} (1 - |z_k|^2)\delta_{z_k}$$ is a Carleson measure with 
Carleson embedding constant $\CC(\mu_N)$ satisfying $\CC(\mu_N) \to 1$ as $N \to \infty$;
\item The measure $$\nu_N = \sum_{k \ge N}\frac{(1 - |z_k|^2)}{\delta_k} \delta_{z_k}$$ is a Carleson measure with embedding constant $\RR_{\nu_N}$ 
on reproducing kernels satisfying $\RR_{\nu_N} \to 1$.
\end{enumerate}
Moreover, if $\{z_n\}$ is an $EIS_p$ $($strong-$AIS_p$$)$ sequence for some $p$ with $1 \le p \le \infty$, then it is an $EIS_p$ $($strong $AIS_p$$)$ sequence for all $p$.
\end{thm}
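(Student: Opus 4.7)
The strategy is to use thinness (condition (2)) as the central hub: once every other condition is shown equivalent to (2), the ``moreover'' clause follows automatically, because (2) makes no reference to $p$. Two of the equivalences are already in hand: Volberg's Theorem~\ref{Volberg} gives (2)$\Leftrightarrow$(3), and Theorem~\ref{thm:Carleson} gives (2)$\Leftrightarrow$(5). For (5)$\Leftrightarrow$(6), a direct reproducing-kernel computation suffices: since $\|K_{z_n}\|_{L^2(\nu_N)}^2/\|K_{z_n}\|_2^2$ contains the diagonal term $1/\delta_n$, the condition $\RR(\nu_N)\to 1$ forces $\delta_n\to 1$; conversely, once the sequence is thin, $\mu_N$ and $\nu_N$ are comparable with ratio tending to $1$, so $\CC(\mu_N)\to 1$ transfers to $\RR(\nu_N)\to 1$ via the universal inequality $\RR\le\CC$.

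For (2)$\Leftrightarrow$(1), the anchor case is $p=\infty$: the direction (2)$\Rightarrow$(1) is exactly Theorem~\ref{thm:replacement} applied to the tail $\{z_n\}_{n\ge N}$. For the converse, feeding $EIS_\infty$ the standard basis vectors yields $f_j\in H^\infty$ with $f_j(z_j)=1$, $f_j(z_k)=0$ for $k\ge N,\,k\ne j$, and $\|f_j\|_\infty\le 1+\varepsilon$; combining with the extremal identity $\inf\{\|f\|_\infty:f(z_j)=1,\,f(z_k)=0\text{ for }k\ge N,\,k\ne j\}=1/\prod_{k\ne j,\,k\ge N}\rho(z_j,z_k)$ forces the tail Blaschke product, and hence $\delta_j$, to tend to $1$ (the finitely many factors with $k<N$ tend to $1$ for each fixed $k$ because $|z_j|\to 1$). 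For the anchor case $p=2$ the cleanest route is through (3): a complete AOB has a biorthogonal system with Gram-matrix inverse of norm $c_N^{-1}\to 1$, which produces exact interpolants $f\in H^2$ with $\|f\|_2\le(1+\varepsilon)\|a\|_{\ell^2}$. For intermediate $1<p<\infty$ I would run the Shapiro--Shields/Jones explicit formula, whose constant depends only on the separation parameter $\delta$; tail thinness provides $\delta\to 1$, giving the required $(1+\varepsilon)$-bound.

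The direction (1)$\Rightarrow$(4) is a trivial rescaling: divide an $EIS_p$-solution by $1+\varepsilon$ to produce a strong-$AIS_p$-solution with error $\varepsilon/(1+\varepsilon)$. The converse (4)$\Rightarrow$(2) is the main obstacle of the theorem. My plan is to apply strong-$AIS_p$ to the standard unit vectors $e_j\in\ell^p$, obtaining $G_j\in H^p$ with $\|G_j\|_p\le 1$ and $G_j(z_n)(1-|z_n|^2)^{1/p}\approx\delta_{nj}$; these serve as approximate biorthogonal duals. For $p=2$ a Gram-matrix computation on $\text{span}\{k_{z_n}\}_{n\ge N}$ extracts the AOB inequalities, and Volberg closes the loop. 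For general $p$ the cleanest route is through the Carleson condition (5): testing the embedding constant $\CC(\mu_N)$ against tail sums of the $G_j$ should show $\CC(\mu_N)\to 1$, after which Theorem~\ref{thm:Carleson} returns us to thinness. The delicate point is quantifying the strong-$AIS_p$ error in terms of the Carleson constant without losing control when $p\ne 2$; this is the step most likely to require new ideas, and with it in hand the remaining ``moreover'' clause is immediate since thinness is a $p$-free condition.
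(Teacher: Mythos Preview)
The paper does not give its own proof of Theorem~\ref{main}; the result is quoted from \cite{GPW}, and the surrounding text supplies only the ingredients (2)$\Leftrightarrow$(3) via Volberg (Theorem~\ref{Volberg}) and (2)$\Leftrightarrow$(5) via Theorem~\ref{thm:Carleson}. Your hub-through-(2) architecture and your arguments for those two equivalences, for (5)$\Leftrightarrow$(6), for (2)$\Leftrightarrow$(1) at $p=\infty$, and for (1)$\Rightarrow$(4) are all correct and match the paper's organization.

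You have, however, mislocated the difficulty. The implication (4)$\Rightarrow$(2) that you flag as the ``main obstacle'' is in fact routine once you notice two things. First, (4)$_p\Rightarrow$(1)$_p$ by iteration: apply the approximate interpolation to the residual sequence $a-\{G(z_j)(1-|z_j|^2)^{1/p}\}$ and sum the resulting geometric series, obtaining an exact interpolant of norm at most $\|a\|_{N,\ell^p}/(1-\varepsilon)$. Second, your extremal argument for (1)$_\infty\Rightarrow$(2) works verbatim for every $p$: testing $EIS_p$ on $a=e_j$, factoring the solution as $B_j^{(N)}g$, and invoking the sharp point-evaluation bound $|g(z)|(1-|z|^2)^{1/p}\le\|g\|_p$ yields $|B_j^{(N)}(z_j)|\ge 1/(1+\varepsilon)$ for $j\ge N$, whence $\delta_j\to 1$ exactly as in your $p=\infty$ argument. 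So (4)$_p\Rightarrow$(1)$_p\Rightarrow$(2) needs no new ideas, and your proposed detour through $\CC(\mu_N)$ for $p\ne 2$ is unnecessary.

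The genuine gap is (2)$\Rightarrow$(1) for $1\le p<\infty$, $p\ne 2$. Your appeal to the Shapiro--Shields/Jones formulas fails here: their constants do \emph{not} tend to $1$ as $\delta\to 1$ (the paper itself observes that the classical explicit bound approaches $2$, not $1$, in that limit, and Theorem~\ref{thm:replacement} is precisely the device that repairs this for $p=\infty$). Obtaining the $(1+\varepsilon)$-constant in $H^p$ for general $p$ is the substantive content of \cite{GPW}; your outline correctly isolates the ingredients but leaves this one direction unsupported.
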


\section{Future Directions: Model Spaces}

We conclude with a discussion of thin sequences in other contexts. Given a (nonconstant) inner function $\Theta$, one can also study thin sequences in model spaces, where the model space for $\Theta$ an inner function is defined by $K_\Theta = H^2 \ominus \Theta H^2$. The reproducing kernel in $K_\Theta$ for $\lambda_0 \in \mathbb{D}$ is 
$$
K_{\lambda_0}^\Theta(z) = \frac{1 - \overline{\Theta(\lambda_0)}{\Theta(z)}}{1 - \overline{\lambda_0}z}
$$ 
and the normalized reproducing kernel is 
$$
k_{\lambda_0}^\Theta(z) = \sqrt{\frac{1 - |\lambda_0|^2}{1 - |\Theta(\lambda_0)|^2}} K_{\lambda_0}^\Theta(z).
$$
Finally, note that $$K_{\lambda_0} = K_{\lambda_0}^\Theta + \Theta \overline{\Theta(\lambda_0)}K_{\lambda_0}.$$
We let $P_\Theta$ denote the orthogonal projection of $H^2$ onto $K_\Theta$.

Asymptotically orthonormal sequences were studied in \cite{F} and \cite{CFT}. We mention here one theorem that encompasses many of these results. Proofs or references for proofs can be found in \cite{GW}. We remark that we get Theorem 4.6 of \cite{GPW} when we let $\Theta = B$ in the proof below (which is simply Theorem \ref{main} above).

\begin{thm}[Theorem 3.5 in \cite{GW}]\label{GW} Let $\{\lambda_n\}$ be an interpolating sequence  in $\mathbb{D}$ and let $\Theta$ be an inner function.  Suppose that $\kappa:=\sup_{n} \left\vert \Theta(\lambda_n)\right| < 1$.  The following are equivalent:
\begin{enumerate} 

\item $\{\lambda_n\}$ is an $EIS_{H^2}$ sequence\label{eish2};
\item $\{\lambda_n\}$ is a thin interpolating sequence\label{thin1};
\item \label{aob} Either
\begin{enumerate}
\item $\{k_{\lambda_n}^\Theta\}_{n\ge1}$ is an $AOB$, or
\item there exists $p \ge 2$ such that $\{k_{\lambda_n}^\Theta\}_{n \ge p}$ is a complete $AOB$ in $K_\Theta$;
\end{enumerate}
\item $\{\lambda_n\}$ is an $AIS_{H^2}$ sequence\label{Aish2};
\item The measure $$\mu_N = \sum_{k \ge N} (1 - |\lambda_k|^2)\delta_{\lambda_k}$$ is a Carleson measure for $H^2$ with 
Carleson embedding constant $\CC(\mu_N)$ satisfying $\CC(\mu_N) \to 1$ as $N \to \infty$\label{C1};
\item The measure $$\nu_N = \sum_{k \ge N}\frac{(1 - |\lambda_k|^2)}{\delta_k} \delta_{\lambda_k}$$ is a Carleson measure for $H^2$ with embedding constant $\RR_{\nu_N}$ 
on reproducing kernels satisfying $\RR_{\nu_N} \to 1$\label{C2}.

\bigskip
\noindent
Further, \eqref{eis} and \eqref{ais} are equivalent to each other and imply each of the statements above. If, in addition, $ \Theta(\lambda_n) \to0$, then  \eqref{eish2} - \eqref{ais} are equivalent.
\bigskip

\item $\{\lambda_n\}$ is an $EIS_{K_\Theta}$ sequence\label{eis};
\item $\{\lambda_n\}$ is an $AIS_{K_\Theta}$ sequence\label{ais}.
\end{enumerate}
\end{thm}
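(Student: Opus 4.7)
My plan is to use Theorem~\ref{main} as the $H^2$ backbone and then transfer information between $H^2$ and the model space $K_\Theta$ via the explicit link between their reproducing kernels, using the standing hypothesis $\kappa:=\sup_n|\Theta(\lambda_n)|<1$ to keep all conversion factors bounded above and bounded below. The equivalence of (1), (2), (4), (5), and (6) is exactly Theorem~\ref{main} applied in $H^2$, so this block requires no further work; what remains is to fold in the model-space AOB statement (3) and then to treat the model-space interpolation conditions (7), (8), where the extra hypothesis $\Theta(\lambda_n)\to 0$ enters.

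For (2)$\Leftrightarrow$(3), I would begin from the projection identity $P_\Theta K_{\lambda_n}=K_{\lambda_n}^\Theta$, which yields
\[
\La P_\Theta k_{\lambda_n},\,P_\Theta k_{\lambda_m}\Ra=\bigl(1-\overline{\Theta(\lambda_n)}\Theta(\lambda_m)\bigr)\La k_{\lambda_n},\,k_{\lambda_m}\Ra.
\]
Using $k_{\lambda_n}^\Theta=(1-|\Theta(\lambda_n)|^2)^{-1/2}P_\Theta k_{\lambda_n}$, the Gram matrix $G^\Theta$ of $\{k_{\lambda_n}^\Theta\}$ becomes the Schur product of the $H^2$ Gram matrix $G$ of $\{k_{\lambda_n}\}$ with a Hermitian weight matrix $W$ having $1$'s on the diagonal and off-diagonal moduli in $[1-\kappa^2,\,(1+\kappa^2)/(1-\kappa^2)]$, hence bounded above and bounded below away from $0$. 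By Volberg's theorem (Theorem~\ref{Volberg}), thinness is equivalent to $G=I+K$ with $K$ compact and with vanishing operator-norm tails, and working with these tail operators directly shows that the same property transfers to $G^\Theta-I$ in both directions. The two sub-cases of (3) simply reflect whether or not a finite initial block must be discarded to obtain completeness in $K_\Theta$.

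For (7) and (8): (7)$\Rightarrow$(8) is immediate from the definitions, and (7)$\Rightarrow$(4) follows because any $K_\Theta$ function is an $H^2$ function and the two reproducing-kernel normalizations differ only by the bounded factor $(1-|\Theta(\lambda_n)|^2)^{-1/2}$, so asymptotic interpolation in $K_\Theta$ passes with controlled constants to asymptotic interpolation in $H^2$. For the converse (1)$\Rightarrow$(7) under $\Theta(\lambda_n)\to 0$, I would take an $H^2$ interpolating function $f$, project to $P_\Theta f\in K_\Theta$, and use the decomposition $f=P_\Theta f+\Theta g$ with $g\in H^2$; evaluating at $\lambda_n$ gives $f(\lambda_n)-\Theta(\lambda_n)g(\lambda_n)=(P_\Theta f)(\lambda_n)$, and since $\Theta(\lambda_n)\to 0$ the correction vanishes asymptotically, producing an asymptotically sharp $K_\Theta$-interpolation.

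The main obstacle is the Gram-matrix step in (2)$\Leftrightarrow$(3): since Schur multiplication by a bounded weight is not in general operator-norm bounded, one cannot simply dominate $\|G^\Theta-I\|$ by $\|G-I\|\cdot\|W\|_\infty$, and the argument must instead exploit the specific fact that for thin sequences $G-I$ is compact with asymptotically vanishing tail norm, combined with the pointwise two-sided bound on $W$. A secondary subtlety is that (1)--(6)$\Rightarrow$(7) genuinely requires $\Theta(\lambda_n)\to 0$ to kill the correction term above, which is why the final equivalences in the theorem are stated only under that additional hypothesis.
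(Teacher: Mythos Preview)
The paper does not prove this theorem; it only cites \cite{GW} (and \cite{F}, \cite{CFT}) for the argument, so there is no in-paper proof to compare against. Your architecture---reduce (1), (2), (4), (5), (6) to Theorem~\ref{main}, handle (3) via the Gram matrix of $\{k_{\lambda_n}^\Theta\}$, and move between $H^2$ and $K_\Theta$ by inclusion and by $P_\Theta$ for (7), (8)---is the right shape. But the step (2)$\Leftrightarrow$(3) has a gap that your own ``main obstacle'' paragraph does not close.

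You correctly write $G^\Theta-I=W\circ(G-I)$ with
\[
W_{nm}=\frac{1-\overline{\Theta(\lambda_n)}\Theta(\lambda_m)}{\sqrt{(1-|\Theta(\lambda_n)|^2)(1-|\Theta(\lambda_m)|^2)}},
\]
and you correctly note that entrywise bounds on $W$ do not yield Schur-multiplier bounds. Your proposed fix---combine ``vanishing tail norm of $G-I$'' with ``pointwise two-sided bounds on $W$''---is still only a pointwise bound and does not force $\|W\circ(G_N-I)\|\to 0$. What is missing is the structural reason $W$ \emph{is} a bounded Schur multiplier: setting $u_n=(1-|\Theta(\lambda_n)|^2)^{-1/2}$ and $v_n=\Theta(\lambda_n)u_n$ gives $W_{nm}=u_nu_m-\overline{v_n}\,v_m$, so $W\circ A=D_uAD_u-D_{\bar v}AD_v$ with $\|D_u\|,\|D_v\|\le(1-\kappa^2)^{-1/2}$, whence $\|G^\Theta_N-I\|\le\frac{1+\kappa^2}{1-\kappa^2}\|G_N-I\|\to 0$. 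For the converse direction the entrywise reciprocal $W^{(-1)}$ is exactly the Gram matrix $\bigl(\langle k_{\Theta(\lambda_n)},k_{\Theta(\lambda_m)}\rangle\bigr)_{n,m}$ in $H^2$, hence positive semidefinite with unit diagonal, so it is a Schur multiplier of norm $1$ and $\|G_N-I\|\le\|G^\Theta_N-I\|$. Without this algebraic structure the Gram-matrix transfer does not go through. A secondary gap: your sketch of (1)$\Rightarrow$(7) via $f\mapsto P_\Theta f$ only produces an \emph{asymptotic} interpolant (the error $\Theta(\lambda_n)g(\lambda_n)$ is small but nonzero), so it really gives (1)$\Rightarrow$(8); you still need (8)$\Rightarrow$(7), which requires an additional iteration/perturbation argument that you have not supplied.
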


There are many directions for future research. For example, connections to truncated Toeplitz operators have been studied by Lopatto and Rochberg \cite{LR} as well as R. Bessonov \cite{B}. In addition, we mention two questions below.

\begin{question} One can define thin sequences in other spaces (for example, Bergman spaces) and see whether the results that we have discussed here extend to those spaces: If a sequence is a thin sequence in a space $X$, is there a particularly good bound on the interpolation constant? \end{question}

\begin{question} Finally, we note that thin sequences are those satisfying $\delta_j \to 1$ and they are interpolating sequences for an important space of functions, $QA$. If $\sum_j (1 - \delta_j)^p < \infty$, is the sequence interpolating for some natural function space? \end{question}

\begin{bibdiv}

\begin{biblist}

\bib{AM}{book}{
  author =      {J. Agler and J.E. McCarthy}
  title =       {{P}ick Interpolation and {H}ilbert Function Spaces}
  publisher =   {American Mathematical Society}
  year =        {2002}
  address =     {Providence}
}

\bib{ACS}{article}{
    AUTHOR = {Axler, Sheldon},
    AUTHOR = {Chang, Sun-Yung A.}
    AUTHOR =  {Sarason, Donald},
     TITLE = {Products of {T}oeplitz operators},
   JOURNAL = {Integral Equations Operator Theory},
  FJOURNAL = {Integral Equations and Operator Theory},
    VOLUME = {1},
      YEAR = {1978},
    NUMBER = {3},
     PAGES = {285--309},
      ISSN = {0378-620X},
   MRCLASS = {47B35 (46J15)},
  MRNUMBER = {511973 (80d:47039)},
MRREVIEWER = {R. G. Douglas},
       URL = {http://dx.doi.org/10.1007/BF01682841},
}

\bib{B}{article}{
    AUTHOR = {Bessonov, R. V.},
     TITLE = {Fredholmness and {C}ompactness of {T}runcated {T}oeplitz and
              {H}ankel {O}perators},
   JOURNAL = {Integral Equations Operator Theory},
  FJOURNAL = {Integral Equations and Operator Theory},
    VOLUME = {82},
      YEAR = {2015},
    NUMBER = {4},
     PAGES = {451--467},
      ISSN = {0378-620X},
   MRCLASS = {47B35},
  MRNUMBER = {3369309},
       URL = {http://dx.doi.org/10.1007/s00020-014-2177-2},
}

\bib{BH}{article}{
    AUTHOR = {Brown, Arlen}
    AUTHOR = {Halmos, P. R.},
     TITLE = {Algebraic properties of {T}oeplitz operators},
   JOURNAL = {J. Reine Angew. Math.},
  FJOURNAL = {Journal f\"ur die Reine und Angewandte Mathematik},
    VOLUME = {213},
      YEAR = {1963/1964},
     PAGES = {89--102},
      ISSN = {0075-4102},
   MRCLASS = {47.25 (47.40)},
  MRNUMBER = {0160136 (28 \#3350)},
MRREVIEWER = {P. Hartman},
}

%\bib{AG}{article}{
%Author= {Axler, Sheldon}
%Author={Gorkin, Pamela}
%TITLE = {Sequences in the maximal ideal space of {$H^\infty$}},
 %JOURNAL = {Proc. Amer. Math. Soc.},
%FJOURNAL = {Proceedings of the American Mathematical Society},
%VOLUME = {108},
%YEAR = {1990},
%NUMBER = {3},
%PAGES = {731--740},}
%
\bib{C}{article}{
  author =      {Carleson, L.},
  title =       {An interpolation problem for bounded analytic functions},
  journal =     {American J. Math.},
  year =        {1958},
  volume =      {80},
  pages =       {921--930}
}

\bib{C1}{article}{,
    AUTHOR = {Carleson, L.},
     TITLE = {Interpolations by bounded analytic functions and the {C}orona
              problem},
 BOOKTITLE = {Proc. {I}nternat. {C}ongr. {M}athematicians ({S}tockholm,
              1962)},
     PAGES = {314--316},
 PUBLISHER = {Inst. Mittag-Leffler, Djursholm},
      YEAR = {1963},
   MRCLASS = {30.36 (41.10)},
  MRNUMBER = {0176274 (31 \#549)},
}

\bib{CFT}{article}
{AUTHOR = {Chalendar, I.}
AUTHOR = {Fricain, E.} 
AUTHOR = {Timotin, D.},
     TITLE = {Functional models and asymptotically orthonormal sequences},
   JOURNAL = {Ann. Inst. Fourier (Grenoble)},
  FJOURNAL = {Universit\'e de Grenoble. Annales de l'Institut Fourier},
    VOLUME = {53},
      YEAR = {2003},
    NUMBER = {5},
     PAGES = {1527--1549},
      ISSN = {0373-0956},
     CODEN = {AIFUA7},
   MRCLASS = {47A45 (30D55 46B15 46E22)},
  MRNUMBER = {2032942 (2004k:47016)},
MRREVIEWER = {V. V. Peller},
       URL = {http://aif.cedram.org/item?id=AIF_2003__53_5_1527_0},
}

     \bib{Ch}{article}
     {
     AUTHOR = {Chang, Sun Yung A.},
     TITLE = {A characterization of {D}ouglas subalgebras},
   JOURNAL = {Acta Math.},
  %FJOURNAL = {Acta Mathematica},
    VOLUME = {137},
      YEAR = {1976},
    NUMBER = {2},
     PAGES = {82--89}}
     
     \bib{DR}{article}{,
    AUTHOR = {Douglas, R. G.}
    Author = {Rudin, Walter},
     TITLE = {Approximation by inner functions},
   JOURNAL = {Pacific J. Math.},
  FJOURNAL = {Pacific Journal of Mathematics},
    VOLUME = {31},
      YEAR = {1969},
     PAGES = {313--320},
      ISSN = {0030-8730},
   MRCLASS = {46.55 (30.00)},
  MRNUMBER = {0254606 (40 \#7814)},
MRREVIEWER = {F. T. Birtel},
}

\bib{DN}{article}{
 AUTHOR = {Dyakonov, Konstantin},
  Author = {Nicolau, Artur},
     TITLE = {Free interpolation by nonvanishing analytic functions},
   JOURNAL = {Trans. Amer. Math. Soc.},
    VOLUME = {359},
      YEAR = {2007},
    NUMBER = {9},
     PAGES = {4449--4465}
}

\bib{E}{article}{
Author = {Earl, J. P},
Title = {On the interpolation of bounded sequences by bounded functions},
Journal = {J. London Math. Soc.},
Volume = {2}
Year = {1970},
Pages = {544--548}
}

%\bib{E2}{article}{
% AUTHOR = {Earl, J. P.},
%     TITLE = {A note on bounded interpolation in the unit disc},
%   JOURNAL = {J. London Math. Soc. (2)},
%  FJOURNAL = {Journal of the London Mathematical Society. Second Series},
%    VOLUME = {13},
%      YEAR = {1976},
%    NUMBER = {3},
 %    PAGES = {419--423},

%}

\bib{F}{article}{
    AUTHOR = {Fricain, Emmanuel},
     TITLE = {Bases of reproducing kernels in model spaces},
   JOURNAL = {J. Operator Theory},
  FJOURNAL = {Journal of Operator Theory},
    VOLUME = {46},
      YEAR = {2001},
    NUMBER = {3, suppl.},
     PAGES = {517--543},
      ISSN = {0379-4024},
   MRCLASS = {46E22 (30D55 42C15 47A70 47B32 47B35)},
  MRNUMBER = {1897152 (2003b:46033)},
MRREVIEWER = {Vladimir Bolotnikov},
}

\bib{GG}{article}{
    AUTHOR = {Gallardo-Guti{\'e}rrez, Eva A.}
    AUTHOR = {Gorkin, Pamela},
     TITLE = {Interpolating {B}laschke products and angular derivatives},
   JOURNAL = {Trans. Amer. Math. Soc.},
  FJOURNAL = {Transactions of the American Mathematical Society},
    VOLUME = {364},
      YEAR = {2012},
    NUMBER = {5},
     PAGES = {2319--2337},
      ISSN = {0002-9947},
     CODEN = {TAMTAM},
   MRCLASS = {30J10 (30H10 30J05 46J15 47B38)},
  MRNUMBER = {2888208},
MRREVIEWER = {P. Lappan},
       URL = {http://dx.doi.org/10.1090/S0002-9947-2012-05535-8},
}

\bib{Garnett}{book}
   {author = {Garnett, John B.},
     title = {Bounded analytic functions},
   series= {Pure and Applied Mathematics},
   volume = {96},
publisher = {Academic Press Inc. [Harcourt Brace Jovanovich Publishers]},
   ADDRESS = {New York},
    YEAR = {1981},
    PAGES = {xvi+467},
    ISBN = {0-12-276150-2}}

\bib{GM}{article}
{
    AUTHOR = {Gorkin, Pamela},
    Author={Mortini, Raymond},
     TITLE = {Asymptotic interpolating sequences in uniform algebras},
   JOURNAL = {J. London Math. Soc. (2)},
%  FJOURNAL = {Journal of the London Mathematical Society. Second Series},
    VOLUME = {67},
      YEAR = {2003},
    NUMBER = {2},
     PAGES = {481--498}
     }

\bib{GPW}{article}{
AUTHOR = {Gorkin, Pamela},
AUTHOR = {Pott, Sandra},
AUTHOR = {Wick, Brett},
Title = {Thin Sequences and Their Role in $H^p$ Theory, Model Spaces, and Uniform Algebras},
Journal = {Revista Matem\'atica Iberoamericana},
YEAR = {to appear}
}

\bib{GMPW}{article}{
    AUTHOR = {Gorkin, Pamela}
    Author = {McCarthy, John E.}
    Author = {Pott, Sandra}
    Author = {Wick, Brett D.},
     TITLE = {Thin sequences and the {G}ram matrix},
   JOURNAL = {Arch. Math. (Basel)},
  FJOURNAL = {Archiv der Mathematik},
    VOLUME = {103},
      YEAR = {2014},
    NUMBER = {1},
     PAGES = {93--99},
      ISSN = {0003-889X},
   MRCLASS = {47B10 (46E22)},
  MRNUMBER = {3240199},
MRREVIEWER = {Geraldo Botelho},
       URL = {http://dx.doi.org/10.1007/s00013-014-0667-8},
}

\bib{GW}{article}{
Author = {Gorkin, Pamela}
Author = {Wick, Brett D.}
Title = {Thin sequences, Model spaces, and Douglas algebras} 
Journal = {J. Fourier Anal. Appl., to appear.}

}

\bib{Hayman}{article}{
    AUTHOR = {Hayman, W.},
     TITLE = {Interpolation by bounded functions},
   JOURNAL = {Ann. Inst. Fourier. Grenoble},
  FJOURNAL = {Universit\'e de Grenoble. Annales de l'Institut Fourier},
    VOLUME = {8},
      YEAR = {1958},
     PAGES = {277--290},
      ISSN = {0373-0956},
   MRCLASS = {30.00},
  MRNUMBER = {0117348 (22 \#8128)},
MRREVIEWER = {A. L. Shields},
}

\bib{H}{article}{
    AUTHOR = {Hedenmalm, H{\aa}kan},
     TITLE = {Thin interpolating sequences and three algebras of bounded
              functions},
   JOURNAL = {Proc. Amer. Math. Soc.},
  FJOURNAL = {Proceedings of the American Mathematical Society},
    VOLUME = {99},
      YEAR = {1987},
    NUMBER = {3},
     PAGES = {489--495},
      ISSN = {0002-9939},
     CODEN = {PAMYAR},
   MRCLASS = {46J15 (30H05)},
  MRNUMBER = {875386 (88c:46065)},
MRREVIEWER = {James Li Ming Wang},
       URL = {http://dx.doi.org/10.2307/2046351},
}

\bib{Ho}{article}{
    AUTHOR = {Hoffman, Kenneth},
     TITLE = {Bounded analytic functions and {G}leason parts},
   JOURNAL = {Ann. of Math. (2)},
  FJOURNAL = {Annals of Mathematics. Second Series},
    VOLUME = {86},
      YEAR = {1967},
     PAGES = {74--111},
      ISSN = {0003-486X},
   MRCLASS = {46.55},
  MRNUMBER = {0215102 (35 \#5945)},
MRREVIEWER = {H. S. Bear},
}

\bib{J}{article}{
   author={Jones, Peter W.},
   title={$L^{\infty }$ estimates for the $\bar \partial $ problem in a
   half-plane},
   journal={Acta Math.},
   volume={150},
   date={1983},
   number={1-2},
   pages={137--152}
}

%\bib{J}{article}{
%AUTHOR = {Jones, Peter W.},
%     TITLE = {Recent advances in the theory of {H}ardy spaces},
%  JOURNAL = {Proc. Amer. Math. Soc.},
%  FJOURNAL = {Proceedings of the American Mathematical Society}
%      YEAR = {to appear.},
%      }

\bib{LR}{article}{
AUTHOR = {Lopatto, P.}
AUTHOR = {Rochberg, R.},
     TITLE = {Schatten-class truncated Toeplitz operators},
 BOOKTITLE = {Proc. Amer. Math. Soc.},
     PAGES = {to appear.},
 PUBLISHER = {PWN},
   ADDRESS = {Warsaw},
      YEAR = {1984},
      pages = {829--838}
      }

\bib{Ma}{article}{
AUTHOR = {Marshall, Donald E.},
     TITLE = {Subalgebras of {$L^{\infty }$} containing {$H^{\infty }$}},
   JOURNAL = {Acta Math.},
  %FJOURNAL = {Acta Mathematica},
    VOLUME = {137},
      YEAR = {1976},
    NUMBER = {2},
     PAGES = {91--98}
     }

\bib{Mo}{article}{
    AUTHOR = {Mortini, Raymond},
     TITLE = {Thin interpolating sequences in the disk},
   JOURNAL = {Arch. Math. (Basel)},
  FJOURNAL = {Archiv der Mathematik},
    VOLUME = {92},
      YEAR = {2009},
    NUMBER = {5},
     PAGES = {504--518},
      ISSN = {0003-889X},
     CODEN = {ACVMAL},
   MRCLASS = {30E05 (30H05 30J10 46J15)},
  MRNUMBER = {2506952 (2010c:30053)},
MRREVIEWER = {Ant{\'o}nio Serra},
       URL = {http://dx.doi.org/10.1007/s00013-009-3057-x},
}

\bib{NOS}{article}{
    AUTHOR = {Nicolau, Artur}
    AUTHOR = {Ortega-Cerd{\`a}, Joaquim}
    AUTHOR = {Seip,Kristian},
     TITLE = {The constant of interpolation},
   JOURNAL = {Pacific J. Math.},
  FJOURNAL = {Pacific Journal of Mathematics},
    VOLUME = {213},
      YEAR = {2004},
    NUMBER = {2},
     PAGES = {389--398},
      ISSN = {0030-8730},
     CODEN = {PJMAAI},
   MRCLASS = {30E05 (46J15)},
  MRNUMBER = {2036925 (2004m:30061)},
MRREVIEWER = {Serguey V. Shvedenko},
       URL = {http://dx.doi.org/10.2140/pjm.2004.213.389},
}

\bib{PTW}{article}{
   author={Petermichl, Stefanie},
   author={Treil, Sergei},
   author={Wick, Brett D.},
   title={Carleson potentials and the reproducing kernel thesis for
   embedding theorems},
   journal={Illinois J. Math.},
   volume={51},
   date={2007},
   number={4},
   pages={1249--1263},
   issn={0019-2082}
}

\bib{S}{article}{
    AUTHOR = {Sarason, Donald},
     TITLE = {Algebras of functions on the unit circle},
   JOURNAL = {Bull. Amer. Math. Soc.},
  FJOURNAL = {Bulletin of the American Mathematical Society},
    VOLUME = {79},
      YEAR = {1973},
     PAGES = {286--299},
      ISSN = {0002-9904},
   MRCLASS = {46J15},
  MRNUMBER = {0324425 (48 \#2777)},
MRREVIEWER = {J. V. Ryff},
}

\bib{Shapiro}{book}{
    AUTHOR = {Shapiro, Joel H.},
     TITLE = {Composition operators and classical function theory},
    SERIES = {Universitext: Tracts in Mathematics},
 PUBLISHER = {Springer-Verlag, New York},
      YEAR = {1993},
     PAGES = {xvi+223},
      ISBN = {0-387-94067-7},
   MRCLASS = {47B38 (30C99 46E20 46J15)},
  MRNUMBER = {1237406 (94k:47049)},
MRREVIEWER = {Aristomenis Siskakis},
       DOI = {10.1007/978-1-4612-0887-7},
       URL = {http://dx.doi.org/10.1007/978-1-4612-0887-7},
}

\bib{SS}{article}{
  author =      {H.S. Shapiro}
  author = {A.L. Shields}
  title =       {On some interpolation problems for analytic functions},
  journal =      {American J. Math.},
  year =        {1961},
  volume =      {83},
  pages =       {513--532}
}

\bib{SW}{article}{
    AUTHOR = {Sundberg, Carl}
    Author =  {Wolff, Thomas H.},
     TITLE = {Interpolating sequences for {$QA_{B}$}},
   JOURNAL = {Trans. Amer. Math. Soc.},
  FJOURNAL = {Transactions of the American Mathematical Society},
    VOLUME = {276},
      YEAR = {1983},
    NUMBER = {2},
     PAGES = {551--581},
      ISSN = {0002-9947},
     CODEN = {TAMTAM},
   MRCLASS = {30H05 (43A40 46H15)},
  MRNUMBER = {688962 (84e:30078)},
MRREVIEWER = {Sun Yung A. Chang},
       URL = {http://dx.doi.org/10.2307/1999068},
}

\bib{V}{article}{
    AUTHOR = {Vol{\cprime}berg, A. L.},
     TITLE = {Two remarks concerning the theorem of {S}. {A}xler, {S}.-{Y}.
              {A}. {C}hang and {D}. {S}arason},
   JOURNAL = {J. Operator Theory},
  FJOURNAL = {Journal of Operator Theory},
    VOLUME = {7},
      YEAR = {1982},
    NUMBER = {2},
     PAGES = {209--218},
      ISSN = {0379-4024},
   MRCLASS = {47B38 (47B05)},
  MRNUMBER = {658609 (84h:47038a)},
MRREVIEWER = {Takahiko Nakazi},
}

\bib{W}{article}
{ author={Wolff, T.}, 
title={Some theorems on vanishing mean oscillation},
journal={Ph.D. Thesis, University of California at Berkeley},
date={1979}}
	
%\bib{Z}{book} {
  %  AUTHOR = {Zhu, Kehe},
  %   TITLE = {Operator theory in function spaces},
  %  SERIES = {Mathematical Surveys and Monographs},
   % VOLUME = {138},
  % EDITION = {Second},
% PUBLISHER = {American Mathematical Society},
  % ADDRESS = {Providence, RI},
   %   YEAR = {2007},
  %   PAGES = {xvi+348},
  %    ISBN = {978-0-8218-3965-2},
  % MRCLASS = {47B35 (46Exx 47-02 47B10 47B33 47B38)},
 % MRNUMBER = {2311536 (2008i:47064)},
%MRREVIEWER = {Miroslav Engli{\v{s}}},
%}

\end{biblist}

\end{bibdiv}

\end{document}